\documentclass[10.9pt,a4paper]{amsart}
\usepackage[english]{babel}

\newtheorem{theorem}{Theorem}[section]

\newtheorem{corollary}[theorem]{Corollary}

\newtheorem{proposition}[theorem]{Proposition}

\theoremstyle{definition}
\newtheorem*{definition}{Definition}

\newtheorem{remark}[theorem]{Remark}

\usepackage[square,sort,comma,numbers]{natbib}

\setlength{\parskip}{2pt}

\usepackage[colorlinks,
    linkcolor={red!50!black},
    citecolor={blue!50!black},
    urlcolor={blue!80!black}]{hyperref}

\usepackage[all]{xy}
\usepackage{pstricks}
\usepackage{enumerate}
\usepackage{amsfonts,amssymb,amsmath,eucal,pinlabel,array,hhline}
\usepackage{slashed}
\usepackage{tabulary}
\usepackage{fancyhdr}
\usepackage{a4wide}
\usepackage{calrsfs,bbm,dsfont}
\usepackage[position=b]{subcaption}


\newcommand{\Z}{\mathds{Z}}
\newcommand{\Q}{\mathds{Q}}
\newcommand{\R}{\mathds{R}}
\newcommand{\C}{\mathds{C}}
\newcommand{\LR}{\R[t^{\pm 1}]}

\newcommand{\id}{\operatorname{id}}
\newcommand{\tr}{\operatorname{tr}}
\newcommand{\Bl}{\operatorname{Bl}}
\newcommand{\bl}{\operatorname{bl}}
\newcommand{\SU}{\operatorname{SU}}
\newcommand{\sign}{\operatorname{sign}}


\DeclareSymbolFont{EulerScript}{U}{eus}{m}{n}
\DeclareSymbolFontAlphabet\mathscr{EulerScript}
\begin{document}

\title{The Levine-Tristram signature: a survey}
\author{Anthony Conway}
\address{Department of Mathematics, Durham University, United Kingdom}
\email{anthonyyconway@gmail.com}
\maketitle
\begin{abstract}
The Levine-Tristram signature associates to each oriented link $L$ in $S^3$ a function $\sigma_L \colon S^1 \to \Z.$ 
This invariant can be defined in a variety of ways, and its numerous applications include the study of unlinking numbers and link concordance.
In this survey, we recall the three and four dimensional definitions of $\sigma_L$, list its main
properties and applications, and give comprehensive references for the proofs of these statements.
\end{abstract}

\section{Introduction}

Given an oriented link $L \subset S^3$, the Levine-Tristram signature is a function $\sigma_L\colon S^1 \to \Z$ whose study goes back to the sixties~\cite{Tristram, LevineKnotCob}.
The main goal of this survey article is to collect the various definitions of $\sigma_L$, while a secondary aim is to list its properties.
Although some elementary arguments are outlined in the text, we provide detailed external references for most of the proofs.
Briefly, we will discuss the definition in terms of Seifert matrices, various 4-dimensional interpretations as well as connections to pairings on the Alexander module.
The next paragraphs give the flavor of some of these constructions.

Most knot theory textbooks that cover the Levine-Tristram signature introduce it using Seifert matrices~\cite{Lickorish, KawauchiBook, KauffmanOnKnots, LivingstonBook}.
Indeed, as we review in Section~\ref{sec:DefLevineTristram}, the Levine-Tristram signature at~$\omega \in S^1$ can be defined using any Seifert matrix $A$ for $L$ by setting
$$ \sigma_L(\omega)=\operatorname{sign}  (1-\omega)A+(1-\overline{\omega})A^T.$$
In the same section, we collect the numerous properties of $\sigma_L$: after listing its behavior under mirror images, orientation reversals and satellite operations, we review applications to unlinking numbers, link concordance and discuss various incarnations of the Murasugi-Tristram inequality~\cite{Murasugi, Tristram}.

The signature admits several 4-dimensional interpretations: either using covers of $D^4$ branched along surfaces cobounding $L$~\cite{Viro}, or applying twisted signatures, or as invariants of the zero framed surgery along $L$. 
Before discussing these constructions in detail in Section~\ref{sec:4DDefinitions}, let us briefly sketch one of them.
Given a locally flat compact connected oriented surface $F \subset D^4$ with boundary~$L$, we set $W_F:=D^4 \setminus \nu F$ and consider the coefficient system $ \pi_1(W_F) \to H_1(W_F) \cong \Z \to \C$ which maps the meridian of $F$ to $\omega$.
This gives rise to a twisted intersection form $\lambda_{\C^\omega}(W_F)$ on the twisted homology $\C$-vector space~$H_2(W_F;\C^\omega)$ whose signature coincides with  the Levine-Tristram signature:
$$ \sigma_L(\omega)=\operatorname{sign} \lambda_{\C^\omega}(W_F).$$
Section~\ref{sec:3DDefOther} is concerned with methods of extracting $\sigma_K(\omega)$ from pairings on the Alexander module $H_1(X_K;\Z[t^{\pm 1}])$ of a knot $K$ (here we write $X_K:=S^3 \setminus \nu K$ for the exterior of $K$)~\cite{MilnorInfiniteCyclic, KeartonSignature}.
Briefly, the signature~$\sigma_K$ can be extracted by considering the primary decomposition of $H_1(X_K;\R[t^{\pm 1}])$ and by studying the Milnor pairing or the Blanchfield pairing
$$ H_1(X_K;\Z[t^{\pm 1}]) \times H_1(X_K;\Z[t^{\pm 1}]) \to \Q(t) /\Z[t^{\pm 1}].$$

In fact, as we discuss in Section~\ref{sec:Other}, the signature can also be understood as a signed count of~$\SU(2)$ representations of $\pi_1(X_K)$ with fixed meridional traces~\cite{Lin, HeusenerKroll}, or in terms of the Meyer cocycle and the Burau representation~\cite{GambaudoGhys}.
Summarizing, $\sigma_L$ admits a wealth of definitions, which never seemed to have been collected in a single article.

We conclude this introduction with two remarks. Firstly, note that we neither mention the Gordon-Litherland pairing~\cite{GordonLitherland} nor the multivariable signature~\cite{CimasoniFlorens}. 
Secondly, we stress that even though $\sigma_L$ was defined 50 years ago, it continues to be actively studied nowadays. 
We mention some recent examples: 
results involving concordance properties of positive knots can be found in~\cite{BaaderDehornoyLiechti};
 the behavior of~$\sigma_L$ under splicing is now understood~\cite{DegtyarevFlorensLecuona};
  the relation between the jumps of~$\sigma_L$ and the zeroes of~$\Delta_L$ has been clarified~\cite{GilmerLivingston, Liechti};
  a diagrammatic interpretation of $\sigma_L$ (inspired by quantum topology) is conjectured in~\cite{Kashaev};
there is a characterization of the functions that arise as knot signatures~\cite{LivingstonCharac};
new lower bounds on unknotting numbers have been obtained via~$\sigma_K$~\cite{LivingstonUnknotting};
there is a complete description of the~$\omega \in S^1$ at which~$\sigma_L$ is a concordance invariant~\cite{NagelPowell};
and $\sigma_L$ is invariant under \emph{topological} concordance~\cite{PowellSignature}.

\medbreak
This survey is organized is as follows. In Section~\ref{sec:DefLevineTristram}, we review the Seifert matrix definition of $\sigma_L$ and list its properties. In Section~\ref{sec:4DDefinitions}, we outline and compare the various four dimensional interpretations of $\sigma_L$. In Section~\ref{sec:3DDefOther}, we give an overview of the definitions using the Milnor and Blanchfield pairings.
In Section~\ref{sec:Other}, we outline additional constructions in terms of $\SU(2)$ representations and braids.

\section{Definition and properties}
\label{sec:DefLevineTristram}

In this section, we review the definition of the Levine-Tristram and nullity using Seifert matrices (Subsection~\ref{sub:DefSeifert}) before listing several properties of these invariants (Subsections~\ref{sub:ElemProperties},~\ref{sub:Unlinking} and~\ref{sub:Concordance}). Knot theory textbooks which mention the Levine-Tristram signature include~\cite{Lickorish, KawauchiBook, KauffmanOnKnots, LivingstonBook}.

\subsection{The definition via Seifert surfaces}
\label{sub:DefSeifert}

A \textit{Seifert surface} for an oriented link $L$ is a compact oriented surface $F$ whose oriented boundary is $L$. While a Seifert surface may be disconnected, we require that it has no closed components. Since $F$ is orientable, it admits a regular neighborhood homeomorphic to $F \times [-1,1]$ in which $F$ is identified with $F \times \lbrace 0 \rbrace$. For $\varepsilon=\pm 1$, the \emph{push off maps} $i^\varepsilon \colon H_1(F;\Z) \rightarrow H_1(S^3\setminus F;\Z)$ are defined by sending a (homology class of a) curve $x$ to~$i^\varepsilon(x):=x \times \lbrace \varepsilon \rbrace$. The \textit{Seifert pairing} of $F$ is the bilinear form
\begin{align*}
 H_1(F;\Z) \times H_1(F;\Z) & \rightarrow \Z \\
 (a,b) &\mapsto \ell k(i^-(a),b).
\end{align*}
A \textit{Seifert matrix} for an oriented link $L$ is a matrix for a Seifert pairing. Although Seifert matrices do not provide link invariants, their so-called \textit{S-equivalence class} does~\cite[Chapter 8]{Lickorish}. 
Given a Seifert matrix $A$, observe that the matrix $(1-\omega)A+(1-\overline{\omega})A^T$ is Hermitian for all $\omega$ lying in $S^1$.

\begin{definition}
\label{def:LevineTristram}
Let $L$ be an oriented link, let $F$ be a Seifert surface  for $L$ with $\beta_0(F)$ components and let $A$ be a matrix representing the Seifert pairing of $F$. Given $\omega \in S^1$, the \textit{Levine-Tristram signature} and \emph{nullity} of $L$ at $\omega$ are defined as
\begin{align*}
 \sigma_L(\omega)&:=\sign((1-\omega)A+(1-\overline{\omega})A^T), \\
 \eta_L(\omega)&:=\operatorname{null}((1-\omega)A+(1-\overline{\omega})A^T)+\beta_0(F)-1. 
 \end{align*}
\end{definition}

These signatures and nullities are well defined (i.e. they are independent of the choice of the Seifert surface)~\cite[Theorem 8.9]{Lickorish} and, varying $\omega$ along $S^1$, produce functions $ \sigma_L,\eta_L \colon S^1 \rightarrow \Z$.
The Levine-Tristram signature is sometimes called the \emph{$\omega$-signature} (or the \emph{equivariant signature} or the \emph{Tristram-Levine signature}), while $\sigma_L(-1)$ is referred to as \emph{the} signature of $L$ or as the \emph{Murasugi signature} of $L$~\cite{Murasugi}. The definition of $\sigma_L(\omega)$ goes back to Tristram~\cite{Tristram} and Levine~\cite{LevineKnotCob}.

\begin{remark}
\label{rem:LocallyConstant}
We argue that $\sigma_L$ and $\eta_L$ are piecewise constant. Both observations follow from the fact that the Alexander polynomial  $\Delta_L(t)$ can be computed (up to its indeterminacy) by the formula $\Delta_L(t)=\det(tA-A^T)$. Thus, given $\omega \in S^1 \setminus \lbrace 1 \rbrace $, the signature $\sigma_L \colon S^1 \rightarrow \Z$ is piecewise constant, and the nullity $\eta_L(\omega)$ vanishes if and only if~$\Delta_L(\omega) \neq 0$. Moreover, the discontinuities of $\sigma_L$ only occur at zeros of $(t-1)\Delta_L^{\mathrm{tor}}(t)$ \cite[Theorem 2.1]{GilmerLivingston}. 
\end{remark}

Several authors assume Seifert surfaces to be connected, and the nullity is then simply defined as the nullity of the matrix $H(\omega)=(1-\omega)A+(1-\overline{\omega})A^T$. The extra flexibility afforded by disconnected Seifert surfaces can for instance be taken advantage of when studying the behavior of the signature and nullity of boundary links.

\begin{remark}
\label{rem:Not1}
Since the matrix $(1-\omega)A+(1-\overline{\omega})A^T$ vanishes at $\omega=1$, we shall frequently think of $\sigma_L$ and $\eta_L$ as functions on $S^1_*:=S^1 \setminus \lbrace 1 \rbrace$. Note nevertheless that for a knot $K$, the function~$\sigma_K$ vanishes in a neighborhood of $1 \in S^1$~\cite[page 255]{LevineMetabolicHyperbolic}, while for a $\mu$-component link, one can only conclude that the limits of $|\sigma_L(\omega)|$ are at most $\mu-1$ as $\omega$ approaches $1$. 
\end{remark}

\subsection{Properties of the signature and nullity}
\label{sub:ElemProperties}
This subsection discusses the behaviour of the signature and nullity under operations such as orientation reversal, mirror image, connected sums and satellite operations.
\medbreak
The following proposition collects several properties of the Levine-Tristram signature.

\begin{proposition}
\label{prop:PropertiesSignature}
Let $L$ be a $\mu$-component oriented link and let $\omega \in S^1$.
\begin{enumerate}
\item The Levine-Tristram signature is symmetric: $\sigma_L(\overline{\omega})=\sigma_L(\omega)$.
\item The integer $\sigma_L(\omega)+\eta_L(\omega)-\mu+1$ is even.
\item If $\Delta_L(\omega) \neq 0$, then $ \sigma_L(\omega)=\mu-\operatorname{sgn}(i^\mu \nabla_L(\sqrt{\omega}))$ mod~$4$. \footnote{Here $\nabla_L(t)$ denotes the \emph{one variable potential function} of $L$. Given a Seifert matrix $A$ for $L$, the \emph{normalized Alexander polynomial} is $D_L(t)=\det(-tA+t^{-1}A^T)$ and $\nabla_L(t)$ can be defined as $\nabla_L(t)=D_L(t)/(t-t^{-1})$.}
\item If $L^*$ denotes the mirror image of $L$, then $\sigma_{L^*}(\omega)=-\sigma_L(\omega)$.
\item If $-L$ is obtained by reversing the orientation of each component of $L$, then $\sigma_{-L}(\omega)=~\sigma_L(\omega)$.
\item Let $L'$ and $L''$ be two oriented links. If $L$ is obtained by performing a single connected sum between a component of $L'$ and a component of $L''$, then $\sigma_{L}(\omega)=\sigma_{L'}(\omega)+\sigma_{L''}(\omega)$.
\item The signature is additive under the disjoint sum operation: if $L$ is the link obtained by taking the disjoint union of two oriented links $L'$ and $L''$, then $\sigma_{L}(\omega)=\sigma_{L'}(\omega)+\sigma_{L''}(\omega)$.
\item If $S$ is a satellite knot with companion knot $C$, pattern $P$ and winding number $n$, then 
$$ \sigma_S(\omega)=\sigma_P(\omega)+\sigma_C(\omega^n). $$
\end{enumerate}
\end{proposition}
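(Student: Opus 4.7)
The plan is to work directly with the Hermitian matrix $H_A(\omega) := (1-\omega)A + (1-\bar{\omega})A^T$, identify how $A$ transforms under each operation, and then read off the effect on signature and nullity by standard linear algebra. Symmetry (1) is immediate since $H_A(\bar{\omega}) = \overline{H_A(\omega)}$, which has the same (real) spectrum. For the mirror (4), reflecting $L$ negates linking numbers and swaps the two pushoff directions, so $L^*$ admits a Seifert matrix equal to $-A^T$; then $H_{-A^T}(\omega) = -H_A(\omega)^T$, and since signature is invariant under transposition, the signature negates. For orientation reversal (5), reversing every component swaps the pushoff directions, giving Seifert matrix $A^T$ and $H_{A^T}(\omega) = H_A(\omega)^T$, whose signature equals that of $H_A(\omega)$.

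For the additivity results (6) and (7), I would use that a boundary-connected sum (respectively, disjoint union in separated balls) of Seifert surfaces for $L'$ and $L''$ is a Seifert surface for $L$, with Seifert matrix the block sum $A' \oplus A''$; then $H_A(\omega) = H_{A'}(\omega) \oplus H_{A''}(\omega)$ and signatures add. The parity statement (2) is a dimension count: for a Seifert surface $F$ with $k := \beta_0(F)$ components spanning a $\mu$-component link, the identity $\chi(F) = 2k - 2g - \mu$ gives $\beta_1(F) = 2g + \mu - k$. Since $\sigma(H) + \operatorname{null}(H)$ has the same parity as the size of $H$ for any Hermitian $H$, one computes $\sigma_L(\omega) + \eta_L(\omega) \equiv (2g + \mu - k) + (k - 1) \equiv \mu - 1 \pmod{2}$. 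For the mod $4$ congruence (3), assuming $\Delta_L(\omega) \neq 0$, combine the fact that for a nonsingular Hermitian form $\operatorname{sgn}(\det) = (-1)^{n^-}$ (with $n^-$ the number of negative eigenvalues) with the identity $\Delta_L(t) = \det(tA - A^T)$. A direct manipulation expresses $\det H_A(\omega)$ as a unit times $\Delta_L(\omega)$ after extracting powers of $1 - \omega = -2i\,e^{i\theta/2}\sin(\theta/2)$ (for $\omega = e^{i\theta}$); the accumulated powers of $i$ account for the $i^\mu$ factor, and combining the sign information with (2) yields the claimed congruence.

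The main obstacle is the satellite formula (8). The strategy is geometric: realize $S$ as the image of $P \subset V$ under an embedding of the solid torus $V$ as a tubular neighborhood of $C$, and construct an explicit Seifert surface $F_S$ for $S$ as follows. Start with a surface $F' \subset V$ with boundary $P \cup (-n\cdot\lambda)$, where $\lambda$ is a longitude of $V$ (such $F'$ exists precisely because the winding number of $P$ in $V$ equals $n$), and cap off the $n$ longitudinal boundary components with $n$ parallel copies of a Seifert surface $F_C$ for $C$. In the natural basis of $H_1(F_S)$ arising from $H_1(F')$ together with the homologies of the $n$ copies of $F_C$, the Seifert matrix $A_S$ acquires a block form that separates the $F'$-contribution from the $n$ stacked copies of $F_C$. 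The key technical step is a change of basis that diagonalizes the cyclic shift on the $n$ copies of $F_C$ via characters of $\Z/n$: under this transformation, the $F_C$-block of $H_{A_S}(\omega)$ splits orthogonally into $n$ pieces, one of which is conjugate to $H_{A_C}(\omega^n)$ (the variable $\omega$ being replaced by $\omega^n$ because the trivial character evaluated through the $n$-fold linking gives $\omega^n$), while the remaining $n-1$ pieces are metabolic and contribute zero signature. Combining with the $\sigma_P(\omega)$ contribution from the $F'$-block delivers the desired identity $\sigma_S(\omega) = \sigma_P(\omega) + \sigma_C(\omega^n)$.
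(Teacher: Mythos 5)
Your arguments for items (1)--(7) are correct, and they are genuinely different in character from the paper's treatment: the paper proves nothing here, deferring every assertion to references (Shinohara, Lickorish, Cimasoni--Florens, Litherland), whereas you give direct Seifert-matrix arguments. The parity count in (2), the determinant/$(-1)^{n^-}$ argument in (3), the transformations $A\mapsto -A^T$ and $A\mapsto A^T$ in (4)--(5), and the block sums in (6)--(7) all work as you describe (in (3) one must track the factor $i^{\beta_1(F)+1}=(-1)^g i^{\mu}$ and the resulting $(-1)^g$ against $\sigma\equiv\beta_1(F)-2n^-$, but this does close up).

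The gap is in the key linear-algebra step of (8). Your geometric setup --- $F'\subset V$ capped off by $n$ parallel copies of $F_C$, vanishing cross-terms, block Seifert matrix --- is sound and is essentially Litherland's. But the $n$ copies of $F_C$ are \emph{linearly} ordered by the normal direction, not cyclically permuted, and no $\Z/n$-action is present. Concretely, the $F_C$-block is $B_{ij}=A_C$ for $i\le j$ and $B_{ij}=A_C^T$ for $i>j$, so that
$$(1-\omega)B+(1-\overline{\omega})B^T \;=\; M\otimes A_C+M^{*}\otimes A_C^{T},\qquad M=(1-\omega)I_n+|1-\omega|^2U_n,$$
with $U_n$ the strictly upper-triangular all-ones matrix. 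This $M$ is triangular, not circulant, so there is no cyclic shift to diagonalize; moreover a discrete-Fourier/character change of basis is quantitatively the wrong move, since a splitting into character summands would yield $\bigoplus_{k}H_{A_C}(\zeta^k\omega)$ with $\zeta=e^{2\pi i/n}$ and total signature $\sum_k\sigma_C(\zeta^k\omega)$, which differs from $\sigma_C(\omega^n)$ already for the trefoil with $n=2$, $\omega=i$ (giving $-4$ instead of $-2$). The mechanism that actually works is this: $M+M^{*}=|1-\omega|^2J_n$ has rank one, so any congruence diagonalizing $M$ yields $\operatorname{diag}(d_1,ir_2,\dots,ir_n)$ with the $r_j$ real and exactly one entry of positive real part; a determinant count then forces $d_1$ to be a positive multiple of $1-\omega^n$, the summands $ir_j(A_C-A_C^T)$ have spectrum symmetric about $0$ and contribute zero signature, and what remains is $\operatorname{sign}H_{A_C}(\omega^n)$. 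Establishing that congruence of $M$ (not a $\Z/n$-eigenspace decomposition) is the step you still owe; alternatively, the $\omega\mapsto\omega^n$ substitution is most transparently explained by the infinite cyclic cover or the twisted intersection form of Section 3, where the meridian of $C$ genuinely maps to $\omega^n$.
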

\begin{proof}
The first assertion is immediate from Definition~\ref{def:LevineTristram}. The proof of the second and third assertions can be found respectively in~\cite{Shinohara}; see also \cite[Lemmas 5.6 and 5.7]{CimasoniFlorens}. The proof of the third assertion can be found in~\cite[Theorem 8.10]{Lickorish}; see also~\cite[Proposition~2.10]{CimasoniFlorens}. The proof of the fifth, sixth and seventh assertions can be respectively be found in~\cite[Corollary 2.9, Proposition~2.12, Proposition 2.13]{CimasoniFlorens}. For the proof of the last assertion, we refer to~\cite[Theorem~2]{LitherlandIterated}; see also~\cite[Theorem 9]{Shinohara} (and~\cite[Theorem 3]{LivingstonMelvin}) for the case $\omega=-1$.
\end{proof}

Note that the second and third assertions of Proposition~\ref{prop:PropertiesSignature} generalize the well known fact that the Murasugi signature of a knot $K$ is even. 
The behavior of $\sigma_L$ under splicing (a generalization of the satellite operation) is discussed in~\cite{DegtyarevFlorensLecuona, DegtyarevFlorensLecuona2}.
For discussions on the (Murasugi) signature of covering links, we refer to~\cite{Murasugi,GordonLitherlandTheoremOfMurasugi} and~\cite{GordonLitherlandMurasugi} (which also provides a signature obstruction to a knot being periodic).

The following proposition collects the corresponding properties of the nullity.

\begin{proposition}
\label{prop:PropertiesNullity}
Let $L$ be an oriented link and let $\omega \in S^1_*:=S^1 \setminus \lbrace 1 \rbrace$.
\begin{enumerate}
\item The nullity is symmetric: $\eta_L(\overline{\omega})=\eta_L(\omega)$.
\item The nullity $\eta_L(\omega)$ is nonzero if and only if $\Delta_L(\omega) = 0$.
\item If $L^*$ denotes the mirror image of $L$, then $\eta_{L^*}(\omega)=\eta_L(\omega)$.
\item If $-L$ is obtained by reversing the orientation of each component of $L$, then $\eta_{-L}(\omega)=~\eta_L(\omega)$.
\item Let $L'$ and $L''$ be two oriented links. If $L$ is obtained by performing a single connected sum between a component of $L'$ and a component of $L''$, then $\eta_L(\omega)=\eta_{L'}(\omega)+\eta_{L''}(\omega)$.
\item If $L$ is the link obtained by taking the disjoint union of two oriented links $L'$ and $L''$, then we have $\eta_{L}(\omega)=\eta_{L'}(\omega)+\eta_{L''}(\omega)+1$.
\item The nullity $\eta_L(\omega)$ is equal to the dimension of the twisted homology $\C$-vector space $H_1(X_L;\C_\omega)$, where $\C_\omega$ is the right $\Z[\pi_1(X_L)]$-module arising from the map $\Z[\pi_1(X_L)] \to \C, \gamma \to~\omega^{\ell k(\gamma, L)}$.
\item If $S$ is a satellite knot with companion knot $C$, pattern $P$ and winding number $n$, then 
$$ \eta_S(\omega)=\eta_P(\omega)+\eta_C(\omega^n). $$
\end{enumerate}
\end{proposition}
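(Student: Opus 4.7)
The plan is to prove properties~(1)--(6) directly from the Seifert matrix definition, to establish~(7) via a Mayer--Vietoris computation on the infinite cyclic cover of $X_L$, and then to derive~(8) from~(7) by a further Mayer--Vietoris argument for the satellite decomposition. Throughout, the strategy parallels that of Proposition~\ref{prop:PropertiesSignature}, with nullity replacing signature; the essential new content is~(7), which bridges the combinatorial and topological definitions.

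Writing $H(\omega) := (1-\omega)A + (1-\overline{\omega})A^T$, Hermiticity immediately gives $H(\overline{\omega}) = H(\omega)^T$, yielding~(1). Properties~(3) and~(4) follow from the fact that $-A^T$ and $A^T$ are Seifert matrices for $L^*$ and $-L$ respectively, so that $H_{L^*}(\omega)$ and $H_{-L}(\omega)$ differ from $H_L(\omega)$ only by transposition and sign, both of which preserve nullity. For~(5) and~(6), one builds Seifert surfaces for $L' \# L''$ and $L' \sqcup L''$ with block-diagonal Seifert matrices $A' \oplus A''$; the $+1$ discrepancy in~(6) comes from the fact that disjoint union adds $\beta_0(F)$ whereas boundary connected sum adds $\beta_0(F) - 1$. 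For~(2), the identity $(1-\omega) = -\omega(1-\overline{\omega})$ on $S^1_*$ gives $H(\omega) = -(1-\overline{\omega})(\omega A - A^T)$; since $\eta_L$ is independent of $F$, one may choose $F$ connected, reducing to $\eta_L(\omega) = \mathrm{null}(\omega A - A^T)$, which is nonzero iff $\det(\omega A - A^T) = \Delta_L(\omega) = 0$.

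For~(7), I would use the infinite cyclic cover $\widetilde X_L \to X_L$ corresponding to the total linking number homomorphism $\pi_1(X_L) \to \Z$. Cutting $X_L$ along a Seifert surface $F$ yields a cut-open exterior $Y$ whose $\Z$-translates assemble into $\widetilde X_L$, producing a Mayer--Vietoris long exact sequence of $\Lambda = \Z[t^{\pm 1}]$-modules. Tensoring over $\Lambda$ with $\C_\omega$ (so that $t$ acts as $\omega$) yields a piece
\[ H_1(F;\C) \xrightarrow{\omega A - A^T} H_1(Y;\C) \to H_1(X_L;\C_\omega) \to H_0(F;\C) \xrightarrow{(1-\omega)\,j_*} H_0(Y;\C), \]
where $j_*$ is induced by inclusion. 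For $\omega \in S^1_*$ and $Y$ connected, the kernel of the last map has dimension $\beta_0(F) - 1$, while the cokernel of the first map has dimension $\mathrm{null}(H(\omega))$; these sum to exactly $\eta_L(\omega)$. This is the technical heart of the proposition.

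Finally, for~(8), decompose $X_S = (V \setminus \nu P) \cup_{T^2} X_C$, where $V$ is a tubular neighbourhood of $C$ containing $P$ with winding number $n$. The meridian of $S$ is the meridian of $P$ in $V$, so the coefficient system restricts to $\C_\omega$ on $V \setminus \nu P$; the meridian of $C$ has linking number $n$ with $S$, so the restriction to $X_C$ is $\C_{\omega^n}$. Using~(7) to identify the twisted first homologies with $\eta_P(\omega)$ and $\eta_C(\omega^n)$, a Mayer--Vietoris argument for this decomposition then assembles these into $\eta_S(\omega) = \eta_P(\omega) + \eta_C(\omega^n)$. The main obstacle is the degenerate case $\omega^n = 1$ with $\omega \neq 1$: then the twisted system on $T^2$ becomes partially trivial and its $H_1$ contributes nontrivially, so one must verify via the connecting maps that these extra terms cancel against corresponding jumps in $\eta_C(\omega^n)$ and in the $H_1(V \setminus \nu P;\C_\omega)$ piece.
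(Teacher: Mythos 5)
Your proposal is correct and follows essentially the same route as the paper: items (1)--(6) by direct Seifert-matrix manipulations (which the paper delegates to references), item (7) by exhibiting $\omega A - A^T$ as a presentation of $H_1(X_L;\C_\omega)$ via the cut-along-$F$ Mayer--Vietoris sequence (the paper's ``some homological algebra''), and item (8) by the Mayer--Vietoris argument for the satellite decomposition that the paper itself suggests as an alternative to citing Degtyarev--Florens--Lecuona. If anything your treatment of (7) is slightly more general, since the paper takes $F$ connected while you track the $\beta_0(F)-1$ correction through the $H_0$-terms, and your caution about the degenerate case $\omega^n=1$ in (8) is warranted, as the paper is silent on it.
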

\begin{proof}
The first assertion is immediate from Definition~\ref{def:LevineTristram}, while the second assertion was already discussed in Remark~\ref{rem:LocallyConstant}. The proof of assertions $(3)-(6)$ can respectively be found in~\cite[Proposition 2.10, Corollary 2.9, Proposition 2.12, Proposition 2.13]{CimasoniFlorens}.
To prove the penultimate assertion, pick a connected Seifert surface $F$ for $L$, let $A$ be an associated Seifert matrix and set $H(\omega)=(1-\omega)A+(1-\overline{\omega})A^T$. Since $tA-A^T$ presents the Alexander module $H_1(X_L;\Z[t^{\pm 1}])$, some homological algebra shows that $H(\omega)$ presents $H_1(X_L;\C_\omega)$; the assertion follows.
The satellite formula can be deduced from~\cite[Theorem 5.2]{DegtyarevFlorensLecuona2}, or by using the equality~$\eta_S(\omega)=\dim_\C H_1(X_S;\C^\omega)$ and running a Mayer-Vietoris argument for $H_1(X_S;\C^\omega)$.
\end{proof}

We conclude this subsection by mentioning some additional facts about the signature function. Livingston provided a complete characterization the functions $\sigma \colon S^1 \to \Z$ that arise as the Levine-Tristram signature function of a knot~\cite{LivingstonCharac}. The corresponding question for links appears to be open.  If $\Delta_L(t)$ is not identically zero, then it has at least $\sigma(L)$ roots on the unit circle~\cite[Appendix]{Liechti}. Finally, we describe the Murasugi signature for some particular classes of links.

\begin{remark}
\label{rem:Positive}
Rudolph showed that the Murasugi signature of the closure of a nontrivial positive braid is negative (or positive, according to conventions)~\cite{RudolphPositive}. This result was later independently extended to positive links~\cite{Traczyk, Przytycki} (see also~\cite{CochranGompf}) and to almost positive links~\cite{PrzytyckiTaniyama}. Later, Stoimenow improved Rudolph's result by showing that the Murasugi signature is bounded by an increasing function of the first Betti number~\cite{StoimenowPositive}. Subsequent improvements of this result include~\cite{FellerLinearBound, BaaderDehornoyLiechti}. Formulas for the Levine-Tristram signature of torus knots can be found in~\cite{LitherlandIterated}.
\end{remark}


\subsection{Lower bounds on the unlinking number}
\label{sub:Unlinking}

In this subsection, we review some applications of signatures to unlinking and splitting links.
\medbreak
The \emph{unlinking number} $u(L)$ of a link $L$ is the minimal number of crossing changes needed to turn~$L$ into an unlink. The \emph{splitting number} $\operatorname{sp}(L)$ of $L$ is the minimal number of crossing changes between different components needed to turn $L$ into the split union of its components. The Levine-Tristram signature and nullity are known to provide lower bounds on both these quantities: 

\begin{theorem}
\label{thm:Unlinking}
Let  $L=L_1\cup \ldots \cup L_\mu$ be an oriented link and let $\omega \in S^1_*=S^1 \setminus \lbrace 1 \rbrace$.
\begin{enumerate}
\item The signature provides lower bounds on the unlinking number:
$$ |\sigma_L(\omega)|+|\eta_L(\omega)+\mu-1| \leq 2u(L).$$
\item The signature and nullity provide lower bounds on the splitting number:
$$ \Big|\sigma_L(\omega)+\sum_{i<j}\ell k(L_i,L_j)-\sum_{i=1}^\mu \sigma_{L_i}(\omega)\Big|+\Big|\mu-1-\eta_L(\omega)+\sum_{i=1}^\mu \eta_{L_i}(\omega)\Big|\leq\operatorname{sp}(L).$$
\end{enumerate}
\end{theorem}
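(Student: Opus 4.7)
The strategy is classical: bound how much $\sigma_L(\omega)$ and $\eta_L(\omega)$ can change under a single crossing change, then iterate along a sequence realizing the unlinking (for (1)) or splitting (for (2)).

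For (1), I would first establish the local bound that if $L'$ is obtained from $L$ by a single crossing change, then
\[
|\sigma_L(\omega) - \sigma_{L'}(\omega)| + |\eta_L(\omega) - \eta_{L'}(\omega)| \leq 2.
\]
To prove this, pick Seifert surfaces $F$ and $F'$ for $L$ and $L'$ agreeing outside a small ball around the crossing; a standard local surgery (possibly followed by a stabilization) arranges the associated Seifert matrices $A$ and $A'$ to have the same size and to differ by $\pm 1$ in a single entry. Then the Hermitian matrix $H(\omega) := (1-\omega)A+(1-\overline{\omega})A^T$ is modified by a rank-at-most-one Hermitian perturbation, and Weyl's interlacing inequalities give the stated bound. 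Iterating along $u(L)$ crossing changes turning $L$ into the $\mu$-component unlink $U_\mu$ and computing $\sigma_{U_\mu}(\omega) = 0$, $\eta_{U_\mu}(\omega) = \mu-1$ from $\mu$ disjoint disks (for which the Seifert matrix is empty) then yields (1).

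For (2), one repeats the argument but restricts to crossing changes between distinct components and simultaneously tracks the linking number. The sharpened local bound reads: if $L'$ is obtained from $L$ by a single crossing change between components $L_i$ and $L_j$, then
\[
\bigl|(\sigma_L(\omega) - \sigma_{L'}(\omega)) + (\ell k(L_i,L_j) - \ell k(L'_i,L'_j))\bigr| + |\eta_L(\omega) - \eta_{L'}(\omega)| \leq 1.
\]
The improvement over (1) comes from the fact that the $\pm 1$ change in $\ell k(L_i,L_j)$ is correlated with the local change in the Seifert matrix (reflecting the half-twisted band joining the two components). Iterating over a splitting sequence of $\operatorname{sp}(L)$ such crossing changes produces the split link $L_1 \sqcup \cdots \sqcup L_\mu$; additivity of $\sigma$ and $\eta$ under disjoint union (Propositions~\ref{prop:PropertiesSignature}(7) and~\ref{prop:PropertiesNullity}(6)) gives $\sigma_{L_1 \sqcup \cdots \sqcup L_\mu}(\omega) = \sum_i \sigma_{L_i}(\omega)$ and $\eta_{L_1 \sqcup \cdots \sqcup L_\mu}(\omega) = \sum_i \eta_{L_i}(\omega) + (\mu-1)$, while the pairwise linking numbers vanish on the split link; substituting in yields (2).

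The main obstacle is the sharpened local bound for (2): the generic rank-one perturbation estimate used in (1) only gives a constant $2$ per crossing change, which would produce $2\operatorname{sp}(L)$ rather than $\operatorname{sp}(L)$ on the right. Obtaining the stated constant $1$ requires pairing the signature change with the linking number change and verifying the sharper cancellation explicitly, for instance by working in an explicit local model for the between-component crossing and comparing both perturbations side-by-side. Once this refined local lemma is in place, the rest of the argument is a direct iteration followed by an elementary computation on the split link.
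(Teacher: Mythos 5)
For part (1) your route is the one the paper intends: the survey reduces the unlinking bound to the crossing-change lemma (Proposition~\ref{prop:CrossingChange}, quoted from Nagel--Owens) and iterates it along an unlinking sequence, exactly as you do. Two small points. First, in the local step the Seifert matrices must be arranged to differ by $1$ in a single \emph{diagonal} entry; an off-diagonal discrepancy would make $(1-\omega)(A'-A)+(1-\overline{\omega})(A'-A)^T$ rank two, and the interlacing estimate you invoke would not directly give $|\Delta\sigma|+|\Delta\eta|\leq 2$. Second, your iteration terminates at $\sigma_{U_\mu}(\omega)=0$, $\eta_{U_\mu}(\omega)=\mu-1$ and therefore yields $|\sigma_L(\omega)|+|\eta_L(\omega)-\mu+1|\leq 2u(L)$, with a \emph{minus} sign; this is the correct inequality (the version printed in the statement, with $|\eta_L(\omega)+\mu-1|$, already fails for the unlink itself and is evidently a typo --- compare Theorem~\ref{thm:MurasugiTristram}).

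For part (2) there is a genuine gap: you state the sharpened local bound $|\Delta(\sigma_L(\omega)+\ell k)|+|\Delta\eta_L(\omega)|\leq 1$ for an inter-component crossing change, correctly identify it as the crux, and then explicitly leave it unproved (``the main obstacle''), so the proposal does not contain a proof of the splitting-number inequality. Note that the survey itself does not prove (2) by your route either: it states that at the time of writing this inequality is only known via the \emph{multivariable} signature of Cimasoni--Florens, whose behavior under crossing changes between differently colored components is controlled directly and which differs from $\sigma_L(\omega)$ by exactly the linking-number correction appearing in the statement. That said, the cancellation you are after is not a mysterious extra input: it is the \emph{signed} refinement of your rank-one perturbation. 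Changing a negative crossing to a positive one perturbs $H(\omega)$ by a \emph{negative semidefinite} rank-one matrix, so writing $s=\Delta\sigma_L(\omega)$ and $n=\Delta\eta_L(\omega)$ one gets $(s,n)\in\{(0,0),(-1,1),(-1,-1),(-2,0)\}$ (this is Proposition~\ref{prop:CrossingChange}(1)), while the relevant linking number increases by exactly $1$; in all four cases $|s+1|+|n|=1$, which is your local bound. If you carry out this sign-sensitive case analysis (rather than the unsigned Weyl estimate, which indeed only gives $2$), the iteration and the disjoint-union computation via Proposition~\ref{prop:PropertiesNullity}(6) finish the argument as you describe. As submitted, however, part (2) rests on an unproved lemma.
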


At the time of writing, the second inequality can only be proved using the multivariable signature~\cite{CimasoniConwayZacharova}. A key step in proving the first inequality is to understand the behavior of the signature and nullity under crossing changes. The next proposition collects several such results:

\begin{proposition}
\label{prop:CrossingChange}
Given, $\omega \in S^1_*$, the following assertions hold.
\begin{enumerate}
\item If $L_+$ is obtained from $L_-$ by changing a single negative crossing change, then
$$(\sigma_{L_+}(\omega) \pm  \eta_{L_+}(\omega))-(\sigma_{L_-}(\omega)\pm\eta_{L_-}(\omega)) \in \lbrace 0,-2 \rbrace .$$
\item If, additionally, we let $\mu$ denote the number of components of $L_+$ (and $L_-$) and assume that $\omega$ is neither a root of $\Delta_{L_-}(t)$ nor of $\Delta_{L_+}(t)$, then
$$ \sigma_{L_+}(\omega)-\sigma_{L_-}(\omega)=
\begin{cases}
0              & \mbox{if }  (-1)^\mu\nabla_{L_+}(\sqrt{\omega})\nabla_{L_-}(\sqrt{\omega}) >0, \\
-2              & \mbox{if }  (-1)^\mu \nabla_{L_+}(\sqrt{\omega})\nabla_{L_-}(\sqrt{\omega}) <0.
 \end{cases} 
$$
\item If $L$ and $L'$ differ by a single crossing change, then
$$ |\eta_L(\omega)-\eta_{L'}(\omega)| \leq 1.  $$
\end{enumerate}
\end{proposition}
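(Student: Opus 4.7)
The plan is to reduce all three assertions to the spectral theory of a rank-one Hermitian perturbation. By a standard band-surgery argument (see \cite[Chapter 8]{Lickorish}), one can choose connected Seifert surfaces $F_+$ for $L_+$ and $F_-$ for $L_-$ that coincide outside a neighborhood of the crossing, and which both arise from a common connected subsurface $F_0$ by attaching a band whose framing differs by one. Extending a basis of $H_1(F_0;\Z)$ to a basis of $H_1(F_\pm;\Z)$ yields Seifert matrices of the block form
$$ A_\pm = \begin{pmatrix} A_0 & v \\ v^T & a_\pm \end{pmatrix}, $$
with $a_- - a_+ = 1$ (the sign being pinned down by the crossing convention). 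Consequently,
$$ H_-(\omega) - H_+(\omega) = 2(1 - \operatorname{Re}\omega)\,E_{n,n}, $$
which, since $\omega \in S^1_*$, is a rank-one positive-definite Hermitian matrix.

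Next, by Weyl's interlacing inequalities for rank-one positive perturbations, the eigenvalues of $H_-(\omega)$ strictly interlace those of $H_+(\omega)$, and at most one eigenvalue can cross zero. A direct enumeration then yields exactly four possibilities for the change $(\Delta\sigma,\Delta\eta) := (\sigma_{L_+}(\omega)-\sigma_{L_-}(\omega),\,\eta_{L_+}(\omega)-\eta_{L_-}(\omega))$: namely $(0,0)$, $(-1,+1)$, $(-1,-1)$, and $(-2,0)$. In every case both $\Delta\sigma+\Delta\eta$ and $\Delta\sigma-\Delta\eta$ lie in $\{0,-2\}$, establishing assertion~(1); moreover $|\Delta\eta|\le 1$, and since ``differing by a single crossing change'' is a symmetric relation on links, this yields assertion~(3).

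For assertion~(2), the hypothesis $\Delta_{L_\pm}(\omega)\neq 0$ together with Remark~\ref{rem:LocallyConstant} forces $\eta_{L_\pm}(\omega)=0$, discarding the two middle cases above and leaving $\Delta\sigma\in\{0,-2\}$. To pin down which, I would invoke Proposition~\ref{prop:PropertiesSignature}(3), using that a crossing change preserves the component count $\mu$. The product $(i^\mu \nabla_{L_+}(\sqrt\omega))(i^\mu \nabla_{L_-}(\sqrt\omega)) = (-1)^\mu \nabla_{L_+}(\sqrt\omega)\nabla_{L_-}(\sqrt\omega)$ is positive precisely when the two real signs $\operatorname{sgn}(i^\mu \nabla_{L_\pm}(\sqrt\omega))$ agree, yielding $\Delta\sigma \equiv 0 \pmod 4$ and hence $\Delta\sigma = 0$; when the product is negative, they disagree, so $\Delta\sigma \equiv 2 \pmod 4$ and hence $\Delta\sigma = -2$.

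The principal obstacle is the bookkeeping required to fix the sign $a_- - a_+ = +1$ (as opposed to $-1$), since the opposite convention would reverse the direction of the rank-one perturbation and produce differences in $\{0,+2\}$ rather than $\{0,-2\}$. This step requires tracking the orientations of the two strands at the crossing in relation to the twist of the attached band. Once this sign is settled, the remaining arguments are routine applications of rank-one eigenvalue interlacing and of the mod-$4$ signature formula already established.
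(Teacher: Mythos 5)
Your argument is correct and is essentially the paper's own proof: part (2) is derived exactly as in the paper from the mod-$4$ congruence of Proposition~\ref{prop:PropertiesSignature} together with part (1), while for parts (1) and (3) the paper simply cites \cite[Lemma 2.1]{NagelOwens}, whose proof is precisely the Seifert-matrix rank-one positive perturbation and eigenvalue-interlacing argument you carry out (your four cases $(0,0)$, $(-1,\pm 1)$, $(-2,0)$ are the right ones). The only loose end is the sign $a_- - a_+ = +1$, which you correctly flag and assert but do not verify; it does come out as claimed with the stated conventions (sanity check: turning the negative crossing of a one-crossing unknot diagram positive yields the right-handed trefoil, whose signature is $-2$), and the off-diagonal blocks of $A_\pm$ need not be transposes of one another, though this is immaterial since only the $(n,n)$ entry enters the difference $H_-(\omega)-H_+(\omega)$.
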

\begin{proof}
The proof of the first and third assertions can be found in~\cite[Lemma 2.1]{NagelOwens} (the proof is written for $\omega=-1$, but also holds for general $\omega$). The proof of the second assertion now follows from the second item of Proposition~\ref{prop:PropertiesSignature} which states that modulo $4$, the signature $\sigma_L(\omega)$ is congruent to~$\mu +1$ or $\mu-1$ according to the sign of $i^\mu \nabla_L(\sqrt{\omega})$.
\end{proof}

Note that similar conclusions hold if $L_-$ is obtained from $L_+$ by changing a single negative crossing change; we refer to~\cite[Lemma 2.1]{NagelOwens} for the precise statement.
Although Proposition~\ref{prop:CrossingChange} is well known, it seems that the full statement is hard to find in the literature: subsets of the statement for knots appear for instance in~\cite{FellerGordian, Garoufalidis, HeusenerKroll} (away from the roots of $\Delta_K(t)$) and for links in~\cite[Lemma 2.1]{NagelOwens} (for~$\omega=-1$, without the statement involving $\nabla_L$), and~\cite[Section 5]{CimasoniFlorens} (in which various local relations are described; see also~\cite[Section 7.10]{ConwayJohn} and~\cite[Lemma 3.1]{OrevkovNest}).

We conclude this subsection with two additional remarks in the knot case.

\begin{remark}
\label{rem:KnotsUnknotting}
In the knot case, the second assertion of Proposition~\ref{prop:CrossingChange} is fairly well known (e.g.~\cite[Lemma 2.2]{Garoufalidis} and~\cite[Equation (10)]{HeusenerKroll}). Indeed, under the same assumptions as in Proposition~\ref{prop:CrossingChange}, and using the normalized Alexander polynomial $D_L(t)$ (which satisfies $D_L(t)=(t-t^{-1})\nabla_L(t)$), it can be rewritten as
$$ \sigma_{K_+}(\omega)-\sigma_{K_-}(\omega)=
\begin{cases}
0              & \mbox{if }  D_{K_+}(\sqrt{\omega})D_{K_-}(\sqrt{\omega}) >0, \\
-2              & \mbox{if }   D_{K_+}(\sqrt{\omega})D_{K_-}(\sqrt{\omega}) <0.
 \end{cases}
$$
Finally, note that for knots, the lower bound on the unknotting number can be significantly improved upon by using the jumps of the signature function~\cite{LivingstonUnknotting}. Other applications of the Levine-Tristram signature to unknotting numbers can be found in~\cite{StoimenowApplications} (as well as a relation to finite type invariants).
\end{remark}

\subsection{Concordance invariance and the Murasugi-Tristram inequalities}
\label{sub:Concordance}

In this subsection, we review properties of the Levine-Tristram signature related to $4$-dimensional topology. Namely we discuss the conditions under which the signature is a concordance invariant, and lower bounds on the $4$-genus. 

\medbreak

Two oriented $\mu$-component links $L$ and $J$ are smoothly (resp. topologically) \emph{concordant} if there is a smooth (resp. locally flat) embedding into $S^3 \times I$ of a disjoint union of $\mu$ annuli $A \hookrightarrow S^3 \times I$, such that the oriented boundary of $A$ satisfies 
$$\partial A = -L \sqcup J \subset  -S^3 \sqcup S^3 = \partial(S^3 \times I).$$
The integers $\sigma_L(\omega)$ and $\eta_L(\omega)$ are known to be concordance invariants for any root of unity~$\omega$ of prime power order~\cite{Murasugi, Tristram}. However, it is only recently that Nagel and Powell gave a precise characterization of the $\omega \in S^1$ at which $\sigma_L$ and $\eta_L$ are concordance invariants~\cite{NagelPowell} (see also~\cite{Viro09}). To describe this characterization, we say that a complex number $\omega \in S^1_*$ is a \emph{Knotennullstelle} if it is the root of a Laurent polynomial $p(t) \in \Z[t^{\pm 1}]$ satisfying $p(1)=\pm 1$. We write $S^1_!$ for the set of $\omega \in S^1$ which do \emph{not} arise a Knotennullstelle.

The main result of~\cite{NagelPowell} can be stated as follows.
\begin{theorem}
\label{thm:NagelPowell}
The Levine-Tristram signature $\sigma_L$ and nullity $\eta_L$ are concordance invariants at $\omega \in S^1_*$ if and only if $\omega \in S^1_!$.
\end{theorem}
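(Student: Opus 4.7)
The plan is to establish the two directions separately, using the four-dimensional interpretation of $\sigma_L$ from the introduction (developed in Section~\ref{sec:4DDefinitions}) and the identification $\eta_L(\omega) = \dim_{\C} H_1(X_L; \C^\omega)$ from Proposition~\ref{prop:PropertiesNullity}.

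For the forward direction, let $\omega \in S^1_!$ and let $A \subset S^3 \times [0,1]$ be a locally flat concordance from $L$ to $J$. I would set $Y := (S^3 \times [0,1]) \setminus \nu A$, choose a locally flat surface $F_L \subset D^4$ with $\partial F_L = L$, and let $F_J := F_L \cup A \subset D^4$, so that after the usual identification $D^4 \cup (S^3 \times [0,1]) = D^4$ one has $W_{F_J} = W_{F_L} \cup_{X_L} Y$. Novikov additivity applied to the $\C^\omega$-twisted intersection forms then gives
$$ \sigma_J(\omega) - \sigma_L(\omega) = \sign \lambda_{\C^\omega}(Y). $$
The core calculation is to show that $\sign \lambda_{\C^\omega}(Y) = 0$ (and, for the nullity, that $\dim H_1(X_L; \C^\omega) = \dim H_1(Y; \C^\omega)$). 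Since $Y$ is a $\Z$-homology cobordism from $X_L$ to $X_J$, the relative module $H_*(Y, X_L; \Z[t^{\pm 1}])$ is $\Z[t^{\pm 1}]$-torsion, and the standard computation of the Alexander polynomial of a concordance complement yields $p(1) = \pm 1$ for its order $p(t)$. The hypothesis $\omega \in S^1_!$ therefore forces $p(\omega) \neq 0$, so $H_*(Y, X_L; \C^\omega) = 0$. The long exact sequence of the pair then delivers the nullity equality, and a half-lives-half-dies argument produces a Lagrangian metabolizer for $\lambda_{\C^\omega}(Y)$, giving the desired vanishing.

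For the converse, suppose $\omega \in S^1_*$ is a Knotennullstelle witnessed by $p(t) \in \Z[t^{\pm 1}]$ with $p(\omega) = 0$ and $p(1) = \pm 1$. The goal is to exhibit a topologically slice knot $K$ with $\sigma_K(\omega) \neq 0$ or $\eta_K(\omega) \neq 0$, contradicting concordance invariance since both invariants vanish on the unknot. My plan is to realise a Seifert matrix in metabolic block form $A = \begin{pmatrix} 0 & B \\ B^T + I_g & D \end{pmatrix}$, whose geometric rank-$g$ metaboliser bounds a slice disc, and to tune the blocks $B$ and $D$ using the coefficients of $p$ so that $\det(tA - A^T) = \pm p(t) p(t^{-1})$ and the Hermitian matrix $(1-\omega)A + (1-\overline{\omega})A^T$ has a signature jump or a nontrivial kernel at $\omega$. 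This realisation step is the genuinely delicate point: the forward direction is essentially a formal consequence of the twisted signature machinery and the computation $p(1) = \pm 1$, whereas producing a concrete slice knot that witnesses the failure of invariance at every Knotennullstelle requires the constructive results that occupy the technical heart of~\cite{NagelPowell}.
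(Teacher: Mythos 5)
The survey states Theorem~\ref{thm:NagelPowell} without proof, deferring entirely to~\cite{NagelPowell}, so there is no in-paper argument to compare against; your outline does, however, follow the strategy of that reference. Your forward direction is essentially correct and isolates the genuine crux: the exterior $Y$ of the concordance is a $\Z$-homology cobordism, so the relative modules $H_*(Y,X_L;\Z[t^{\pm 1}])$ are annihilated by Laurent polynomials $p$ with $p(1)=\pm 1$, and $\omega\in S^1_!$ is by definition not a root of any such $p$, whence $H_*(Y,X_L;\C_\omega)=0$; the nullity claim then follows from Proposition~\ref{prop:PropertiesNullity}(7), and the signature claim follows because all of $H_2(Y;\C_\omega)$ is then carried by $\partial Y$, so the twisted intersection form on $Y$ vanishes identically (you do not even need a half-lives-half-dies argument). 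The one step you should not wave through is the word ``Novikov additivity'': the decomposition $W_{F_J}=W_{F_L}\cup_{X_L}Y$ is a gluing along the $3$-manifold $X_L$, which has nonempty toroidal boundary, so plain Novikov additivity does not apply and one must invoke Novikov--Wall additivity and check that the Maslov correction term vanishes (or reorganize the argument as a bordism over $B\Z$, in the spirit of the proof of Theorem~\ref{thm:4DDefSurfaceTop}).

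For the converse, your plan is the right one but, as you concede, the proof is not there: everything rests on realising a \emph{genuinely} slice (not merely algebraically slice) knot with the prescribed Seifert matrix, which is precisely the construction of Cha--Livingston~\cite{ChaLivingston} that the survey quotes immediately after the theorem statement. The block form alone only yields algebraic sliceness; to obtain a slice disc you must arrange that the curves spanning the metaboliser form a $0$-framed unlink on the Seifert surface and then perform ambient surgery. Two further remarks: the nullity half of the converse comes nearly for free once the realisation is done, since your determinant computation gives $\Delta_K(t)=p(t)p(t^{-1})$ up to units, so $\Delta_K(\omega)=0$ and hence $\eta_K(\omega)\neq 0$ by Proposition~\ref{prop:PropertiesNullity}(2); by contrast, forcing $\sigma_K(\omega)\neq 0$ genuinely requires the tuning of the block $D$, and that tuning (together with the unlink realisation) is the technical heart you have deferred to~\cite{NagelPowell, ChaLivingston}.
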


In the knot case, Cha and Livingston had previously shown that for any Knotennullstelle $\omega$, there exists a slice knot $K$ with $\sigma_K(\omega) \neq 0$ and $\eta_K(\omega) \neq 0$~\cite{ChaLivingston}. Here, recall that a knot $K \subset S^3$ is smoothly (resp. topologically) \emph{slice} if it is smoothly (resp. topologically) concordant to the unknot or, equivalently, if it bounds a smoothly (resp. locally flat) properly embedded disk in the $4$-ball. Still restricting to knots, the converse can be established as follows.

\begin{remark}
\label{rem:AlgebraicConcordance}
The Levine-tristram signature of an oriented knot $K$ vanishes at $\omega \in S^1_!$ whenever~$K$ is algebraically slice i.e. whenever it admits a metabolic Seifert matrix $A$. To see this, first note that since $A$ is metabolic, the matrix $H(\omega)=(1-\omega)A+(1-\overline{\omega})A^T$ is congruent to one which admits a half size block of zeros in its upper left corner. Furthermore the definition of~$S^1_!$ and the equality~$H(t)=(t^{-1}-1)(tA-A^T)$ imply that $H(\omega)$ is nonsingular for $\omega \in S^1_!$: indeed, since~$K$ is a knot, $\Delta_K(1)=\pm 1$. Combining these facts, $\sigma_K(\omega)=\operatorname{sign}(H(\omega))$ vanishes for~$\omega \in S^1_!$. As slice knots are algebraically slice (see e.g.~\cite[Proposition 8.17]{Lickorish}), we have established that if~$K$ is slice, then $\sigma_K$ vanishes on $S^1_!$.
\end{remark}

Using Remark~\ref{rem:AlgebraicConcordance} and Theorem~\ref{thm:Unlinking}, one sees that the Levine-Tristram signature actually provides lower bounds on the \emph{slicing number} of a knot $K$ i.e. the minimum number of crossing changes required to convert $K$ to a slice knot~\cite{LivingstonSlicingNumber, OwensSlicingNumber}. In a somewhat different direction, the Levine-Tristram signature is also a lower bound on the algebraic unknotting number~\cite{Fogel,MurakamiUnknotting, BorodzikFriedl0, BorodzikFriedl, BorodzikFriedl2}.

Several steps in Remark~\ref{rem:AlgebraicConcordance} fail to generalize from knots to links: there is no obvious notion of algebraic sliceness for links and, if $L$ has two components or more, then $\Delta_L(1)=0$. In fact, even the notion of a slice link deserves some comments.

\begin{remark}
\label{rem:SliceLink}
An oriented link $L=L_1 \cup \ldots \cup L_\mu$ is smoothly (resp. topologically) \emph{slice in the strong sense} if there are disjointly smoothly (resp. locally flat) properly embedded disks $D_1,\ldots,D_\mu$ with $\partial D_i=L_i$. As a corollary of Theorem~\ref{thm:NagelPowell}, one sees that if $L$ is topologically slice in the strong sense, then $\sigma_L(\omega)=0$ and $\eta_L(\omega)=\mu-1$ for all $\omega \in S^1_!$.

On the other hand, an oriented link is smoothly (resp. topologically) \emph{slice in the ordinary sense} if it is the cross-section of a single smooth (resp. locally flat) $2$-sphere in $S^4$. It is known that if~$L$ is slice in the ordinary sense, then $\sigma_L(\omega)=0$ for all $\omega$ of prime power order~\cite[Corollary 7.5]{CimasoniFlorens} (see also~\cite[Theorem 3.13]{KauffmanTaylor}). There is little doubt that this result should hold for a larger subset of~$S^1$ and in the topological category.
\end{remark}

In a similar spirit, the Levine-Tristram signatures can be used to provide restrictions on the surfaces a link can bound in the 4-ball. Such inequalities go back to Murasugi~\cite{Murasugi} and Tristram~\cite{Tristram}. Since then, these inequalities have been generalized in several directions~\cite[Corollary 4.3]{GilmerConfiguration},~\cite[Theorem 5.19]{Florens},~\cite[Theorem 7.2]{CimasoniFlorens},~\cite{LivingstonWitt},~\cite[Section 4]{Viro09} and~\cite[Theorem 1.2 and Corollary 1.4]{ConwayNagelToffoli}. 
Applications to the study of algebraic curves can be found in~\cite{Florens, OrevkovNest, OrevkovSomeExampleMultivariable}.

The following theorem describes such a \emph{Murasugi-Tristram inequality} in the topological category which holds for a large subset of $S^1$.

\begin{theorem}
\label{thm:MurasugiTristram}
If an oriented link $L$ bounds an $m$-component properly embedded locally flat surface $F \subset D^4$ with first Betti number $b_1(F)$, then for any $\omega \in S^1_!$, the following inequality holds:
$$ |\sigma_L(\omega)|+|\eta_L(\omega)-m+1| \leq b_1(F).$$
\end{theorem}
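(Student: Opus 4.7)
The plan is to use the four-dimensional interpretation of the Levine--Tristram signature sketched in the introduction. Set $W_F:=D^4\setminus\nu F$, and let $\C^\omega$ denote the one-dimensional local system on $W_F$ arising from the map $\pi_1(W_F)\to H_1(W_F)\cong\Z^m\to\C^*$ that sends every meridian of every component of $F$ to $\omega$. Granting the identity $\sigma_L(\omega)=\operatorname{sign}\lambda_{\C^\omega}(W_F)$ (extended to the disconnected case), the strategy is to apply the elementary inequality
\[ |\operatorname{sign}(\lambda)|+\dim\ker(\lambda)\leq\dim V \]
for Hermitian forms to $V=H_2(W_F;\C^\omega)$. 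The theorem then reduces to establishing the upper bound $\dim H_2(W_F;\C^\omega)\leq b_1(F)$ together with the lower bound $\dim\ker\lambda_{\C^\omega}(W_F)\geq|\eta_L(\omega)-m+1|$.

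For the first bound, I would compute $\chi(W_F)=1-m+b_1(F)$ via Mayer--Vietoris against $\nu F$ (using that the normal circle bundle has vanishing Euler characteristic). Since $\omega\neq 1$ a meridian acts nontrivially, so $H_0(W_F;\C^\omega)=0$, and $H_4(W_F;\C^\omega)=0$ because $W_F$ has nonempty boundary. The Knotennullstelle hypothesis $\omega\in S^1_!$ is used at exactly this point: no $p(t)\in\Z[t^{\pm 1}]$ with $p(1)=\pm 1$ vanishes at $\omega$, which lets one show that Alexander-type modules over $\Z[t^{\pm 1}]$ arising from the infinite cyclic cover of $W_F$ localize to zero at $t=\omega$, bounding $\dim H_1(W_F;\C^\omega)$. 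Poincaré--Lefschetz duality controls $\dim H_3$ similarly, and the Euler characteristic identity then yields the required upper bound.

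For the lower bound, identify $\ker\lambda_{\C^\omega}(W_F)$ with the image of $H_2(\partial W_F;\C^\omega)\to H_2(W_F;\C^\omega)$ via the long exact sequence of the pair and Poincaré--Lefschetz duality. The boundary splits as $\partial W_F=X_L\cup N$, where $N$ is a disjoint union of $S^1$-bundles over the components of $F$, glued to $X_L$ along tori over $L$. A Mayer--Vietoris computation expresses the relevant twisted homology in terms of $H_1(X_L;\C_\omega)$ -- whose dimension equals $\eta_L(\omega)$ by Proposition~\ref{prop:PropertiesNullity}(7) -- together with $H_*(N;\C^\omega)$; the $m$ components of $F$ produce the correction $-m+1$.

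The main obstacle I anticipate is the delicate homological bookkeeping on $\partial W_F$, together with ensuring that the Knotennullstelle hypothesis is invoked wherever a twisted Alexander-type module is required to vanish after specialization at $\omega$. It is precisely the failure of such vanishing at Knotennullstellen that produces the counterexamples from Remark~\ref{rem:AlgebraicConcordance} and \cite{ChaLivingston,NagelPowell}, providing a useful check on where the condition $\omega\in S^1_!$ is essential.
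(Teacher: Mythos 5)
Your overall strategy is the right one, and it is essentially the approach taken in the sources this survey cites for this statement (\cite{ConwayNagelToffoli,PowellSignature,Viro09}): invoke Theorem~\ref{thm:4DDefSurfaceTop}, bound $|\operatorname{sign}|+\operatorname{null}$ by $\dim H_2(W_F;\C_\omega)$, compute $\chi(W_F)=1-m+b_1(F)$, and use the Knotennullstelle condition to control the twisted homology. The genuine gap is your lower bound $\dim\ker\lambda_{W_F,\C_\omega}\geq|\eta_L(\omega)-m+1|$. Over the field $\C$ the radical of $\lambda_{W_F,\C_\omega}$ is exactly $\operatorname{im}\bigl(H_2(\partial W_F;\C_\omega)\to H_2(W_F;\C_\omega)\bigr)$, and the very Mayer--Vietoris decomposition $\partial W_F=X_L\cup_{L\times S^1}(F\times S^1)$ you propose (the meridian acts by $\omega\neq 1$ on every piece of $N$, so $H_*(L\times S^1;\C_\omega)=H_*(F\times S^1;\C_\omega)=0$) gives $\dim H_2(\partial W_F;\C_\omega)=\dim H_1(\partial W_F;\C_\omega)=\dim H_1(X_L;\C_\omega)=\eta_L(\omega)$. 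Hence $\dim\ker\lambda_{W_F,\C_\omega}\leq\eta_L(\omega)$, and your claimed bound fails whenever $\eta_L(\omega)<m-1$ (e.g.\ a $2$-component link with $\eta_L(\omega)=0$ bounding a $2$-component surface): the radical simply cannot supply the term $m-1-\eta_L(\omega)$.

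What actually happens in that regime is that $\dim H_2(W_F;\C_\omega)$ is strictly smaller than $b_1(F)$, so the saving must come from your first bound rather than the second. Concretely, set $h:=\dim H_1(W_F;\C_\omega)$. The Knotennullstelle lemma applied to $H_1(W_F;\Z[t^{\pm 1}])$ modulo the $m-1$ meridian-difference classes (note $H_1(W_F;\Z[t^{\pm1}])/(t-1)\cong\Z^{m-1}$, so the module itself does \emph{not} localize to zero) gives $h\leq m-1$; the same lemma applied to the pair $(W_F,\partial W_F)$ gives $H_1(W_F,\partial W_F;\C_\omega)=0$, whence $H_3(W_F;\C_\omega)=0$ by duality and $H_1(\partial W_F;\C_\omega)\to H_1(W_F;\C_\omega)$ is onto, i.e.\ $h\leq\eta_L(\omega)$ and $\dim\ker\lambda_{W_F,\C_\omega}=\eta_L(\omega)-h$. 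Then $\dim H_2(W_F;\C_\omega)=1-m+b_1(F)+h$, and $|\sigma_L(\omega)|\leq\dim H_2(W_F;\C_\omega)-\dim\ker\lambda_{W_F,\C_\omega}=b_1(F)+1-m-\eta_L(\omega)+2h\leq b_1(F)-|(m-1)-\eta_L(\omega)|$, the last step using $h\leq\min(m-1,\eta_L(\omega))$. So the absolute value in the statement arises from the minimum of two upper bounds on $h$, not from a lower bound on the radical alone; as written, your two intermediate claims do not combine to give the theorem when $\eta_L(\omega)<m-1$.
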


Observe that if $L$ is a strongly slice link, then $m$ is equal to the number of components of~$L$ and~$b_1(F)=~0$ and thus $\sigma_L(\omega)=0$ and $\eta_L(\omega)=m-1$ for all $\omega \in S^1_!$, recovering the result mentioned in Remark~\ref{rem:SliceLink}. On the other hand, if $K$ is a knot, then Theorem~\ref{thm:MurasugiTristram} can be expressed in terms of the topological $4$-genus~$g_4(K)$ of $K$: the minimal genus of a locally flat surface in $D^4$ cobounding $K$. An article studying the sharpness of this bound includes~\cite{LiechtiPositiveBraid}.

In order to obtain results which are valid on the whole of $S^1$, it is possible to consider the average of the one-sided limits of the signature and nullity. Namely for $\omega=e^{i\theta} \in S^1$ and any Seifert matrix $A$, one sets $H(\omega)=(1-\omega)A+(1-\overline{\omega})A^T$ and considers
\begin{align*}
&\sigma_L^{\operatorname{av}}(\omega)=\frac{1}{2}\big( \lim_{\eta \to \theta_+} \operatorname{sign}(H(e^{i\eta})) +\lim_{\eta \to \theta_-} \operatorname{sign}(H(e^{i\eta})) \big), \\
&\eta^{\operatorname{av}}(\omega)=\frac{1}{2}\big( \lim_{\eta \to \theta_+} \operatorname{null}(H(e^{i\eta})) +\lim_{\eta \to \theta_-} \operatorname{null}(H(e^{i\eta})) \big).
\end{align*}
The earliest explicit observation that these \emph{averaged Levine-Tristram signatures} are smooth concordance invariants seems to go back to Gordon's survey~\cite{GordonSurvey}. Working with the averaged Levine-Tristram signature and in the topological locally flat category, Powell~\cite{PowellSignature} recently proved a Murasugi-Tristram type inequality which holds for \emph{each} $\omega \in S^1_*$.

We conclude this subsection with two remarks on knots.
\begin{remark}
\label{rem:DoublySlice}
A knot $K$ is smoothly (resp. topologically) \emph{doubly slice} if it is the cross section of an unknotted smoothly (resp. locally flat) embedded 2-sphere $S^2$ in $S^4$. It is known that if $K$ is topologically doubly slice, then $\sigma_K(\omega)$ vanishes for \emph{all}~$\omega \in S^1$; no averaging is needed~\cite{Sumners, KeartonDouble, LevineMetabolicHyperbolic}. Is there a meaningful statement for links?

The Levine-Tristram signature also appears in knot concordance in relation to a particular von Neumann $\rho$-invariant (or $L^2$-signature). This invariant associates a real number to any closed $3$-manifold together with a map $\phi \colon \pi_1(M) \to \Gamma$, with $\Gamma$ a PTFA group. When $M$ is the $0$-framed surgery along a knot $K$ and~$\phi$ is the abelianization map, then this invariant coincides with the (normalized) integral of~$\sigma_K(\omega)$ along the circle~\cite[Proposition 5.1]{CochranOrrTeichnerStructure}. Computations of this invariant on (iterated) torus knots can be found in~\cite{KirbyMelvin, BorodzikIntegral,Collins}.
\end{remark}

\section{4-dimensional definitions of the signature}
\label{sec:4DDefinitions}

In this section, we describe 4-dimensional definitions of the Levine-Tristram signature using embedded surfaces in the 4-ball (Subsection~\ref{sub:4DLevineTristramSurfaces}) and as a bordism invariant of the 0-framed surgery (Subsection~\ref{sub:4DSignatureBordismDef}).

\subsection{Signatures via exteriors of surfaces in the $4$-ball}
\label{sub:4DLevineTristramSurfaces}
We relate the Levine-Tristram signature to signature invariants of the exterior of embedded surfaces in the 4-ball. Historically, the first approach of this kind involved branched covers~\cite{Viro} (see also~\cite{CassonGordonSlice, KauffmanTaylor}) while more recent results make use of twisted homology~\cite{CochranOrrTeichnerStructure, Viro09, PowellSignature}.  
\medbreak
Given a smoothly properly embedded connected surface~$F \subset D^4$, denote by $W_F$ the complement of a tubular neighborhood of~$F$. A short Mayer-Vietoris argument shows that $H_1(W_F;\Z)$ is infinite cyclic and one may consider the covering space $W_k \to W_F$ obtained by composing the abelianization homomorphism with the quotient map $H_1(W_F;\Z)\cong \Z \to \Z_k$. The restriction of this cover to $F \times S^1$ consists of a copy of $F \times p^{-1}(S^1)$, where $p \colon S^1\rightarrow S^1$ is the $k$-fold cover of the circle. Extending $p$ to a cover $D^2 \rightarrow D^2$ branched along $0$, and setting
$$ \overline{W}_F :=W_k \cup_{F \times S^1} (F \times D^2)$$
produces a cover $\overline{W}_F \rightarrow D^4$ branched along $F=F \times \lbrace 0 \rbrace$. Denote by $t$ a generator of the finite cyclic group $\Z_k$. The $\C[\Z_k]$-module structure of $H_2(\overline{W}_F,\C)$ gives rise to a complex vector space
$$ H_2(\overline{W}_F,\C)_\omega = \lbrace x \in H_2(\overline{W}_F,\C) \ | \ tx=\omega x \rbrace $$
for each root of unity $\omega$ of order $k$. Restricting the intersection form on $H_2(\overline{W}_F,\C)$ to $H_2(\overline{W}_F,\C)_\omega$ produces a Hermitian pairing whose signature we denote by $\sigma_\omega(\overline{W}_F)$. 

The next result, originally due to Viro~\cite{Viro}, was historically the first 4-dimensional interpretation of the Levine-Tristram signature; see also~\cite{KauffmanTaylor}. 

\begin{theorem}
\label{thm:LevineTristram4D}
Assume that an oriented link $L$ bounds a smoothly properly embedded compact oriented surface $F \subset D^4$ and let $\overline{W}_F$ be the $k$-fold cover of $D^4$ branched along~$F$. Then, for any root of unity $\omega \in S^1_*$ of order $k$, the following equality holds:
$$ \sigma_L(\omega)=\sigma_\omega(\overline{W}_F).$$
\end{theorem}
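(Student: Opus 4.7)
\medbreak
The plan is to verify the identity for a distinguished model surface and then to argue that both sides are independent of the choice of~$F$.

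\emph{Step 1: computation for a pushed-in Seifert surface.} Choose a Seifert surface $\Sigma \subset S^3$ for $L$ and push its interior slightly into the interior of $D^4$, obtaining a properly embedded surface $F_0 \subset D^4$ with $\partial F_0 = L$. For this specific~$F_0$, the $k$-fold branched cover $\overline{W}_{F_0}$ admits an explicit description as $k$ copies of $W_{F_0} = D^4 \setminus \nu F_0$ glued cyclically along copies of the pushed-in Seifert surface. A direct Mayer-Vietoris computation (going back to Kauffman-Taylor) identifies $H_2(\overline{W}_{F_0}; \C)_\omega$ with $H_1(\Sigma; \C)$, and identifies the restriction of the intersection form with the Hermitian form $(1-\omega)A + (1-\overline{\omega})A^T$, where $A$ is a Seifert matrix for $\Sigma$. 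The key content is that the linking between the two push-offs of $\Sigma$, transported through the cyclic cover, is recorded precisely by this combination of $A$ and $A^T$. Taking signatures yields $\sigma_\omega(\overline{W}_{F_0}) = \sigma_L(\omega)$ by Definition~\ref{def:LevineTristram}.

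\emph{Step 2: independence of $F$.} For an arbitrary $F \subset D^4$ with boundary $L$, glue two copies of $D^4$ along their common boundary $S^3$ to form $S^4$, placing $F$ in one copy and a pushed-in Seifert surface $F_0$ with reversed orientation in the other. The union $\widehat{F} \subset S^4$ is a closed oriented surface, and the $k$-fold cyclic cover $Y$ of $S^4$ branched along $\widehat{F}$ decomposes along the $k$-fold branched cover $\Sigma_k(L)$ of $S^3$ along $L$ as
$$ Y \ = \ \overline{W}_F \cup_{\Sigma_k(L)} (-\overline{W}_{F_0}). $$
Novikov additivity, applied equivariantly with respect to the deck $\Z/k$-action so that it descends to each $\omega$-eigenspace, gives $\sigma_\omega(Y) = \sigma_\omega(\overline{W}_F) - \sigma_\omega(\overline{W}_{F_0})$. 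The Rokhlin-Kauffman-Viro signature formula for cyclic branched covers expresses $\sigma_\omega(Y)$ in terms of the self-intersection of $\widehat{F}$ in $S^4$, which vanishes since $H_2(S^4)=0$. Hence $\sigma_\omega(Y) = 0$, and combined with Step~1 this gives $\sigma_\omega(\overline{W}_F) = \sigma_\omega(\overline{W}_{F_0}) = \sigma_L(\omega)$.

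\emph{Main obstacle.} The technical core is Step~1: one must carefully identify the $\omega$-eigenspace of $H_2(\overline{W}_{F_0}; \C)$, track orientation and push-off conventions so that the signs in $(1-\omega)A + (1-\overline{\omega})A^T$ emerge with the correct normalization, and account for the additional complication when $\Sigma$ is disconnected (which affects the nullity contribution $\beta_0(F)-1$, though not the signature). Step~2 is conceptually cleaner but relies on two nontrivial facts that must be checked: that Novikov additivity respects the eigenspace decomposition (which follows from the equivariance of the gluing along $\Sigma_k(L)$) and that the Rokhlin-Viro correction vanishes on each individual $\omega$-eigenspace, not merely on the total signature.
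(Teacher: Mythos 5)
Your proposal is correct and follows exactly the route the survey indicates for this theorem: the explicit Mayer--Vietoris computation for a pushed-in Seifert surface (as in Viro and Kauffman--Taylor), followed by independence of the choice of $F$ via closing up in $S^4$, equivariant Novikov additivity along $\Sigma_k(L)$, and the G-signature theorem to kill the eigenspace signatures of the closed branched cover. The paper itself only cites these ingredients rather than carrying them out, and your write-up correctly identifies both the structure and the genuine technical points (the eigenspace identification in Step~1 and the equivariance of the additivity in Step~2).
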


As for the results described in Subsection~\ref{sub:Concordance}, Theorem~\ref{thm:LevineTristram4D} can be sharpened by working in the topological category and using arbitrary $\omega \in S^1_*$. 
As the next paragraphs detail, the idea is to rely on twisted homology instead of branched covers~\cite{PowellSignature, Viro09, ConwayNagelToffoli}.

Let $\omega \in S^1_*$. From now on, we assume that $F \subset D^4$ is a locally flat properly embedded (possibly disconnected) compact oriented surface. Since $H_1(W_F;\Z)$ is free abelian, there is a map~$H_1(W_F;\Z) \to \C$ obtained by sending each meridian of $F$ to $\omega$. Precomposing with the abelianization homomorphism, gives rise to a right $\Z[\pi_1(W_F)]$-module structure on $\C$ which we denote by $\C_\omega$ for emphasis. We can therefore consider the twisted homology groups $H_*(W_F;\C_\omega)$ and the corresponding $\C$-valued intersection form $\lambda_{W_F,\C_\omega}$ on $H_2(W_F;\C_\omega)$.

The following result can be seen as a generalization of Theorem~\ref{thm:LevineTristram4D}.

\begin{theorem}
\label{thm:4DDefSurfaceTop}
Assume that an oriented link $L$ bounds a properly embedded locally flat compact oriented surface $F \subset D^4$. Then the following equality holds for any $\omega \in S^1_*$:
$$ \sigma_L(\omega)=\operatorname{sign}(\lambda_{W_F,\C_\omega}).$$
\end{theorem}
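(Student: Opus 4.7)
The plan is to split the proof into two parts: verify the formula for a particular ``model'' surface $F_0$ where the computation recovers the Seifert matrix definition of $\sigma_L(\omega)$, and then show that $\operatorname{sign}(\lambda_{W_F,\C_\omega})$ is independent of the choice of locally flat bounding surface~$F$.

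For the first part, I would pick a Seifert surface $\Sigma \subset S^3$ for $L$ with Seifert matrix $A$ and let $F_0 \subset D^4$ be $\Sigma$ pushed slightly into the interior of $D^4$. A handle decomposition of $W_{F_0}$ inherited from one of $\Sigma$ (together with a Mayer--Vietoris analysis relating $W_{F_0}$ to $S^3\setminus\Sigma$) produces an explicit small chain complex for $C_*(W_{F_0};\C_\omega)$. The expected output is that $H_2(W_{F_0};\C_\omega)$ has dimension $\operatorname{rank}(A)$ and that the twisted intersection form is represented by the Hermitian matrix $H(\omega)=(1-\omega)A+(1-\overline{\omega})A^{T}$, whose signature equals $\sigma_L(\omega)$ by Definition~\ref{def:LevineTristram}.

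For the second part, given any locally flat $F \subset D^4$ with $\partial F = L$, I would glue $F$ and $-F_0$ along $L$ inside $S^4 = D^4 \cup_{S^3} D^4$ to obtain a closed locally flat surface $\Sigma' \subset S^4$. A suitable Novikov-type additivity for twisted signatures, applied across the zero-framed surgery along $L$ (the common boundary of $W_F$ and $W_{F_0}$), would then yield
$$\operatorname{sign}(\lambda_{W_F,\C_\omega}) - \operatorname{sign}(\lambda_{W_{F_0},\C_\omega}) = \operatorname{sign}(\lambda_{X_{\Sigma'},\C_\omega}),$$
where $X_{\Sigma'} := S^4 \setminus \nu \Sigma'$. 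I would then show that this right-hand side vanishes: the decomposition $X_{\Sigma'} = W_F \cup_{\partial} W_{F_0}$, combined with twisted Poincar\'e--Lefschetz duality, realizes the images of $H_2(W_F;\C_\omega)$ and $H_2(W_{F_0};\C_\omega)$ in $H_2(X_{\Sigma'};\C_\omega)$ as complementary Lagrangians of the twisted intersection form.

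The main obstacle is this last vanishing step, together with keeping track of possible correction terms in the twisted additivity. The hypothesis $\omega \in S^1_*$ is essential here, as it controls the twisted homology of the separating zero-surgery (identified with the nullity by Proposition~\ref{prop:PropertiesNullity}(7)); at $\omega=1$ the signature would instead depend on $F$. A further subtlety is that the locally flat hypothesis on $F$ forces everything into the topological category, precluding smooth transversality and handle-based arguments and requiring more delicate duality-based reasoning throughout.
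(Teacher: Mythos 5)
The paper, being a survey, does not prove Theorem~\ref{thm:4DDefSurfaceTop}; it refers to~\cite{PowellSignature, Viro09, ConwayNagelToffoli} and only indicates the shape of the argument: a computation for a model surface together with the statement that $\operatorname{sign}(\lambda_{W_F,\C_\omega})$ is independent of $F$, the latter ``relying on Novikov-Wall additivity as well as on bordism considerations over the classifying space $B\Z$''. Your two-step plan (compute for a pushed-in Seifert surface $F_0$ to recover $\operatorname{sign} H(\omega)$ with $H(\omega)=(1-\omega)A+(1-\overline{\omega})A^T$, then prove independence of $F$ by gluing $F\cup_L -F_0$ into $S^4$) matches this outline, and the first step is the standard computation. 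Two smaller corrections to the second step: the piece along which $W_F$ and $W_{F_0}$ are glued inside $X_{\Sigma'}$ is the link exterior $X_L=S^3\setminus \nu L$ (a $3$-manifold with toroidal boundary), not the zero-framed surgery $M_L$; consequently what is needed is Wall's non-additivity for the twisted signature, and the vanishing of the Maslov-type correction term is precisely where $\omega\in S^1_*$ and the control of $H_*(X_L;\C_\omega)$ enter.

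The genuine gap is your proposed mechanism for the vanishing of $\operatorname{sign}(\lambda_{X_{\Sigma'},\C_\omega})$. The images of $H_2(W_F;\C_\omega)$ and $H_2(W_{F_0};\C_\omega)$ in $H_2(X_{\Sigma'};\C_\omega)$ cannot be complementary Lagrangians: for a codimension-zero inclusion the induced map on $H_2$ preserves intersection numbers, so the form $\lambda_{X_{\Sigma'},\C_\omega}$ restricted to the image of $H_2(W_F;\C_\omega)$ is the pushforward of $\lambda_{W_F,\C_\omega}$, whose signature is (by the very theorem you are proving) $\sigma_L(\omega)$, nonzero already for the trefoil. If these images were isotropic, the theorem would force $\sigma_L\equiv 0$. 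The correct replacement --- and the one the paper points to --- is a bordism argument over $B\Z$: after dealing with the boundary $\Sigma'\times S^1$ of $X_{\Sigma'}$, one uses that $\Omega_4^{STOP}(B\Z)\to\Omega_4^{STOP}(pt)$ is an isomorphism modulo torsion detected by the ordinary signature, so that the $\C_\omega$-twisted signature of the relevant closed $4$-manifold over $B\Z$ equals its untwisted signature, which vanishes. With that substitution (and the Wall correction term controlled), your outline becomes the argument of~\cite{ConwayNagelToffoli, PowellSignature}.
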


A key feature of Theorems~\ref{thm:LevineTristram4D} and~\ref{thm:4DDefSurfaceTop} lies in the fact that the signature invariants associated to $W_F$ do not depend on the choice of $F$. This plays a crucial role in the $4$-dimensional proofs of Murasugi-Tristram type inequalities.
This independance statement relies on the Novikov-Wall addivity as well as on the G-signature theorem (for Theorem~\ref{thm:LevineTristram4D}) and on bordisms considerations over the classifying space $B\Z$ (for Theorem~\ref{thm:4DDefSurfaceTop}).

\subsection{Signatures as invariants of the $0$-framed surgery}
\label{sub:4DSignatureBordismDef}

In this subsection, we outline how the Levine-Tristram signature of a link $L$ can be viewed as a bordism invariant of the 0-framed surgery along $L$. To achieve this, we describe bordism invariants of pairs consisting of a closed connected oriented 3-manifold together with a map from $\pi_1(M)$ to $\Z_m$ or $\Z$.
\medbreak

Let $M$ be an oriented closed $3$-manifold and let $\chi \colon \pi_1(M) \to \Z_m$ be a homomorphism. Since the bordism group $\Omega_3(\Z_m)$ is finite, there exists a non-negative integer~$r$, a $4$-manifold $W$ and a map $\psi \colon \pi_1(W) \to \Z_m$ such that the boundary of $W$ consists of the disjoint union of $r$ copies of $M$ and the restriction of $\psi$ to $\partial W$ coincides with $\chi$ on each copy of $M$. If these conditions are satisfied, we write $\partial (W,\psi)=r(M,\chi)$ for brevity. Mapping the generator of $\Z_m$ to $\omega:=e^{\frac{2 \pi i}{m}}$ gives rise to a map $\Z[\Z_m] \to \Q(\omega)$. Precomposing with $\psi$, we obtain a $(\Q(\omega),\Z[\pi_1(W)])$-bimodule structure on $\Q(\omega)$ and twisted homology groups $H_*(W;\Q(\omega))$. The $\Q(\omega)$-vector space $H_2(W;\Q(\omega))$ is endowed with a $\Q(\omega)$-valued Hermitian form~$\lambda_{W,\Q(\omega)}$ whose signature is denoted $ \text{sign}^\psi(W):=\text{sign}(\lambda_{W,\Q(\omega)}).$ In this setting, the \emph{Casson-Gordon $\sigma$-invariant} of $(M,\chi)$ is
$$\sigma(M,\chi):=\frac{1}{r}\left (\text{sign}^\psi(W)-\text{sign}(W)\right ) \in \mathbb{Q}.$$
We now focus on the case where $M=M_L$ is the closed $3$-manifold obtained by performing $0$-framed surgery on a link $L$. In this case, a short Mayer-Vietoris argument shows that $H_1(M_L;\Z)$ is freely generated by the meridians of $L$.

Casson and Gordon proved the following theorem~\cite[Lemma 3.1]{CassonGordonSlice}.

\begin{theorem}
\label{thm:CG}
Assume that $m$ is a prime power and let $\chi \colon H_1 (M_L;\Z) \to \Z_m \subset \C^*$ be the character mapping each meridian of $L$ to $\omega^r$, where $\omega=e^{\frac{2 \pi i}{m}}$ and $0 < r < m$. Then the Casson-Gordon $\sigma$-invariant satisfies
$$ \sigma(M_L,\chi)=\sigma_L(\omega^r).$$
\end{theorem}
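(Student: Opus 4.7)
The plan is to construct an explicit bounding pair $(W,\psi)$ with $\partial(W,\psi)=(M_L,\chi)$ built from a Seifert surface, and then to identify $\operatorname{sign}^\psi(W)-\operatorname{sign}(W)$ with the signature of the Hermitian matrix $H(\omega^r):=(1-\omega^r)A+(1-\overline{\omega}^r)A^T$ associated to a Seifert matrix $A$ of $L$.

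For the construction, I would fix a connected Seifert surface $F$ for $L$, push the interior of $F$ into $D^4$ to obtain a properly embedded surface $F'\subset D^4$ with $\partial F'=L$, and set $V:=D^4\setminus \nu F'$. A Mayer-Vietoris computation yields $H_1(V;\Z)\cong\Z$, generated by a meridian $\mu_{F'}$ of $F'$, and $\partial V$ is homeomorphic to $X_L\cup_{L\times S^1}(F\times S^1)$ with the $S^1$-factor corresponding to $\mu_{F'}$; near $\partial F=L$, this $S^1$ represents the $0$-framed longitude of $L$. To turn $\partial V$ into $M_L$, I would attach $2g(F)$ two-handles to $V$ along curves $\alpha_i\times\{\mathrm{pt}\}\subset F\times S^1\subset\partial V$, where $\{\alpha_i\}$ is a symplectic basis of $H_1(F;\Z)$; the effect on the boundary is precisely to replace $F\times S^1$ by $\bigsqcup_{i=1}^\mu D^2\times S^1$ (with the $0$-framed longitude as $S^1$-factor), giving $\partial W=M_L$. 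Because each $\alpha_i$ is nullhomologous in $V$ (indeed in $D^4$), the homomorphism $H_1(V;\Z)\to\Z_m$, $\mu_{F'}\mapsto \omega^r$, extends to $\psi\colon H_1(W;\Z)\to\Z_m$ whose restriction to $\partial W=M_L$ sends each meridian of $L$ to $\omega^r$, and therefore agrees with $\chi$. Hence $\partial(W,\psi)=(M_L,\chi)$ with multiplicity $r=1$.

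For the signature step I would first note that the $2g(F)$ new two-handles are attached along nullhomologous curves in $V$, so they enlarge both the ordinary and the twisted intersection forms by orthogonal hyperbolic summands of signature zero. It therefore suffices to compute the signatures on $V$. A handle-decomposition argument shows $\operatorname{sign}(V)=0$, since $V$ is a codimension-zero submanifold of the signature-zero manifold $D^4$. For the twisted form, I would exploit the infinite cyclic cover $\widetilde{V}\to V$ classified by $\mu_{F'}$: a Mayer-Vietoris / spectral-sequence computation in the style of Kauffman-Taylor identifies $H_2(V;\C_{\omega^r})$ together with its Hermitian intersection form with $(\C^{2g(F)+\mu-1},H(\omega^r))$, where the Seifert matrix enters through the linking of pushed-off cycles in $F$ with their lifts to $\widetilde{V}$. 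Since the signature of $H(\omega^r)$ equals $\sigma_L(\omega^r)$ by Definition~\ref{def:LevineTristram}, one concludes
\[
\sigma(M_L,\chi)=\operatorname{sign}^\psi(W)-\operatorname{sign}(W)=\operatorname{sign}(H(\omega^r))=\sigma_L(\omega^r).
\]

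The main obstacle is the twisted homology computation identifying the twisted intersection form on $V$ with the Seifert-matrix Hermitian form $H(\omega^r)$; this is a classical but technical calculation resting on the interplay between the Seifert pairing and the $\Z[t^{\pm 1}]$-module structure of the infinite cyclic cover of $X_L$, and conceptually underlies the pairing-theoretic description of $\sigma_L$ discussed in Section~\ref{sec:3DDefOther}. A subsidiary technical point is verifying that the 2-handle attachment procedure really yields $M_L$ with $\psi|_{\partial W}=\chi$, which hinges on the identification of $\mu_{F'}$ near $\partial F=L$ with the $0$-framed longitude of $L$. The prime-power hypothesis on $m$ is not actually needed for the above construction and identification; it enters elsewhere in Casson-Gordon theory, for instance to ensure concordance invariance properties of $\sigma(M_L,\chi)$.
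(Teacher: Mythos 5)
Your strategy---realize $(M_L,\chi)$ as the boundary of the exterior $V=D^4\setminus \nu F'$ of a pushed-in Seifert surface, completed by $2$-handles, and identify the twisted intersection form with $H(\omega^r)=(1-\omega^r)A+(1-\overline{\omega}^r)A^T$---is the standard route (the survey itself gives no proof of Theorem~\ref{thm:CG}, citing Casson--Gordon directly, and this is essentially the mechanism behind Theorem~\ref{thm:4DDefSurfaceTop}). However, two steps are genuinely gapped. First, the handle attachment does not do what you claim: the curves of a symplectic basis of $H_1(F;\Z)$ pairwise intersect, so one cannot surger $F$ along all $2g$ of them; the correct move is to compress along $g$ disjoint curves spanning a Lagrangian, which turns $F$ into a \emph{planar} surface $P$ with $\mu$ boundary components. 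For $\mu>1$ this leaves $P\times S^1$ on the boundary, not $\bigsqcup_i D^2\times S^1$, so further $2$-handles along Seifert-framed longitudes $\gamma_j\times\{\mathrm{pt}\}$ are required, and one must then verify that these attaching curves map to the intended elements under $H_1(V;\Z)\to \Z_m$; this is not automatic and already involves the Seifert framing.

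Second, and more seriously, attaching a $2$-handle along a null-homologous curve does \emph{not} in general enlarge the (ordinary or twisted) intersection form by an orthogonal hyperbolic summand: attaching an $n$-framed $2$-handle to $D^4$ along an unknot adds a single class of square $n$. Whether the new classes pair trivially with $H_2(V;\C_{\omega^r})$ and hyperbolically among themselves depends on framings and equivariant linking numbers, which is precisely where the Seifert form lives; so this step cannot be waved through. The clean alternatives are either to compute $H_2(W;\C_{\omega^r})$ directly from a handle decomposition, or to invoke the independence of $\operatorname{sign}(\lambda_{W,\C_{\omega^r}})$ of the choice of bounding $4$-manifold via Novikov--Wall additivity and bordism over $B\Z_m$, as the survey indicates after Theorem~\ref{thm:4DDefSurfaceTop}. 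Finally, the identification of $(H_2(V;\C_{\omega^r}),\lambda)$ with $H(\omega^r)$, which you correctly isolate as the main obstacle, is the actual content of the theorem (the Viro/Kauffman--Taylor computation); deferring it means your argument is a reduction to Theorem~\ref{thm:4DDefSurfaceTop} rather than a self-contained proof. Your closing observation that the prime-power hypothesis on $m$ is not needed for this particular identity is correct.
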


Note that Casson and Gordon proved a version of Theorem~\ref{thm:CG} for arbitrary surgeries on links; we also refer to~\cite[Theorem 3.6]{GilmerConfiguration} and~\cite[Theorem 6.7]{CimasoniFlorens} for generalizations to more general characters. The idea of defining link invariants using the Casson-Gordon invariants is pursued further in~\cite{Florens, FlorensGilmer}.

\begin{remark}
\label{rem:Eta}
The Casson-Gordon $\sigma$-invariant (and thus the Levine-Tristram signature) can be understood as a particular case of the Atiyah-Patodi-Singer $\rho$-invariant~\cite{AtiyahPatodiSinger} which associates a real number to pairs $(M,\alpha)$, with $M$ a closed connected oriented 3-manifold and $\alpha \colon \pi_1(M) \to U(k)$ a unitary representation. For further reading on this point of view, we refer to~\cite{LevineEta, Letsche, FriedlEtaCasson, FriedlEtaL2, FriedlEtaLink}.
\end{remark}

Next, we describe how to circumvent the restriction that $\omega$ be of finite order. Briefly, the idea is to work in the infinite cyclic cover as long as possible, delaying the appearance of $\omega$~\cite[Section~2]{LitherlandCobordism}; see also~\cite[Section 5]{CochranOrrTeichnerStructure}. Following~\cite{PowellSignature}, the next paragraphs describe the resulting construction.

Let $M$ be a closed connected oriented $3$-manifold with a map $\phi \colon \pi_1(M) \to \Z$. Since $\Omega_3^{STOP}(\Z)$ is zero, $M$ bounds a connected topological $4$-manifold $W$ and there is a map $\psi \colon \pi_1(W) \to \Z$ which extends $\phi$. This map endows $\Q(t)$ with a $(\Q(t),\Z[\pi_1(W)])$-bimodule structure and therefore gives rise to a $\Q(t)$-valued intersection form~$\lambda_{W,\Q(t)}$ on $H_2(W;\Q(t))$. It can be checked that~$\lambda_{W,\Q(t)}$ induces a nonsingular Hermitian form~$\lambda_{W,\Q(t)}^{\operatorname{nonsing}}$ on the quotient of $H_2(W;\Q(t))$ by $\operatorname{im} (H_2(M;\Q(t)) \to H_2(W;\Q(t)))$~\cite[Lemma 3.1]{PowellSignature}. As a consequence,~$\lambda_{W,\Q(t)}^{\operatorname{nonsing}}$ gives rise to an element $[\lambda_{W,\Q(t)}^{\operatorname{nonsing}}]$ of the Witt group $W(\Q(t))$. Taking the averaged signature at $\omega \in S^1$ of a representative of an element in $W(\Q(t)) $ produces a well defined homomorphism $\operatorname{sign}_\omega \colon W(\Q(t)) \to \C$. As a consequence, for $\omega \in S^1_*$ and $(M,\phi)=\partial (W,\psi)$ as above, one can set
$$ \sigma^{\operatorname{av}}_{M,\phi}(\omega)=\operatorname{sign}_\omega([\lambda_{W,\Q(t)}^{\operatorname{nonsing}}])-\operatorname{sign}(W). $$ 
It can be checked that $\sigma_{M,\phi}^{\operatorname{av}}$ does not depend on $W$ and $\psi$~\cite[Section 3]{PowellSignature}. We now return to links: we let $L$ be an oriented link, assume that $M$ is the 0-framed surgery $M_L$ and that $\phi$ is the map~$\phi_L$ which sends each meridian of $L$ to $1$. 

The following result is due to Powell~\cite[Lemma 4.1]{PowellSignature}.

\begin{theorem}
\label{thm:Powell}
For any oriented link $L$ and any $\omega \in S^1_*$, the following equality holds:
$$\sigma_{M_L,\phi_L}^{\operatorname{av}}(\omega)=\sigma_L^{\operatorname{av}}(\omega).$$
\end{theorem}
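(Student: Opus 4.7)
To prove Theorem~\ref{thm:Powell}, I would exploit the bordism invariance of $\sigma^{\operatorname{av}}_{M_L, \phi_L}$ from~\cite[Section~3]{PowellSignature}. Since the invariant does not depend on the choice of $(W,\psi)$ with $\partial(W,\psi) = (M_L,\phi_L)$, the plan is to exhibit one convenient such pair built from a Seifert surface for $L$ and then to identify the resulting Witt class $[\lambda_{W,\Q(t)}^{\operatorname{nonsing}}] \in W(\Q(t))$ with the class over $\Q(t)$ represented by the Hermitian form $(1-t)A + (1-t^{-1})A^T$, where $A$ is a Seifert matrix for $L$.

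For the construction, I would take a connected Seifert surface $F$ for $L$ and push its interior into $D^4$ so that $F \subset D^4$ is properly embedded with $\partial F = L$. The $0$-framed $2$-handle trace $V = D^4 \cup_L (h^2_1 \cup \cdots \cup h^2_\mu)$ bounds $M_L$ and the cores of the $h^2_i$ cap $F$ off to a closed oriented surface $\widehat{F} \subset V$. Because $\pi_1(V) = 1$, there is no nontrivial map $\pi_1(V) \to \Z$ extending $\phi_L$, so $V$ itself cannot serve as our bounding manifold. Instead, remove a tubular neighborhood of $\widehat{F}$ from $V$ (thereby reviving meridians of $L$ in $\pi_1$) and cap off the auxiliary boundary component $\widehat{F} \times S^1$ with a suitable $4$-manifold $N$ of controlled $\pi_1$-behavior (such an $N$ exists by $\Omega_3^{STOP}(\Z) = 0$). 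The resulting $W$ satisfies $\partial W = M_L$ and admits a $\psi \colon \pi_1(W) \to \Z$ sending the meridian of $\widehat{F}$ to $1$, thereby extending $\phi_L$.

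The final step identifies the twisted intersection form. The infinite cyclic cover $\widetilde{W}$ contains, as its essential piece, the classical infinite cyclic cover of the Seifert surface exterior — namely $\Z$ copies of $D^4 \setminus \nu F$ glued along the pushoffs $F^+$ and $F^-$ — and a Mayer--Vietoris argument identifies $H_2(W;\Q(t))/\operatorname{im}(H_2(M_L;\Q(t)) \to H_2(W;\Q(t)))$ with a Hermitian $\Q(t)$-module Witt-equivalent to the one represented by $(1-t)A + (1-t^{-1})A^T$. Arranging $\operatorname{sign}(W) = 0$ (possible by controlling the untwisted intersection form via the handle structure) then yields $\sigma^{\operatorname{av}}_{M_L,\phi_L}(\omega) = \sigma_L^{\operatorname{av}}(\omega)$ directly from the two definitions. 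The hard part is the dual constraint on $W$: it must simultaneously bound $M_L$, carry a $\psi$ extending the nontrivial $\phi_L$, and admit a sufficiently explicit description of its infinite cyclic cover for the Witt class of the twisted intersection form to be matched with the Seifert form. The choice to pass to the Witt group $W(\Q(t))$, rather than working pointwise on $S^1$, is what enables uniform treatment of the zeros of $\Delta_L$, where the pointwise Hermitian matrix $(1-\omega)A + (1-\bar\omega)A^T$ becomes singular.
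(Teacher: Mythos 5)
The survey gives no proof of this statement---it defers entirely to \cite[Lemma 4.1]{PowellSignature}---and your outline is essentially the argument given there: use the independence of $\sigma^{\operatorname{av}}_{M_L,\phi_L}$ of the choice of $(W,\psi)$, build $W$ from the complement of the capped-off pushed-in Seifert surface $\widehat{F}$ inside the $0$-framed $2$-handle attachment $V$, cap off the extra boundary component $\widehat{F}\times S^1$ by a piece that is invisible to both signatures (e.g.\ a handlebody times $S^1$), and identify the $\Q(t)$-intersection form with $(1-t)A+(1-t^{-1})A^T$ via the infinite cyclic cover. One caveat on your last step: for a link whose linking matrix has nonzero signature you cannot simply ``arrange $\operatorname{sign}(W)=0$'' while also keeping the twisted Witt class equal to that of the Seifert form, since the classes of $H_2(V;\Q)$ that can be pushed off $\widehat{F}$ survive in $W$ and contribute a $t$-independent block to $\lambda_{W,\Q(t)}$; the correct bookkeeping is that this block has equal $\omega$-signature and ordinary signature, so it cancels against the $-\operatorname{sign}(W)$ term built into the definition of $\sigma^{\operatorname{av}}_{M_L,\phi_L}$.
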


Theorem~\ref{thm:Powell} has two main strengths. Firstly, it holds for all $\omega \in S^1_*$. Secondly, thanks to the definition of~$\sigma^{\operatorname{av}}_{M,\phi}(z)$, it provides a useful tool to work in the topological category (see e.g Powell's proof a Murasugi-Tristram type inequality~\cite[Theorem 1.4]{PowellSignature}).

\section{Signatures via pairings on infinite cyclic covers}
\label{sec:3DDefOther}

In this section, we review two additional intrinsic descriptions of the Levine-Tristram signature of a knot $K$. Both constructions make heavy use of the algebraic topology of the infinite cyclic cover of the exterior of $K$: the first uses the Milnor pairing (Subsection~\ref{sub:Milnor}), while the second relies on the Blanchfield pairing (Subsection~\ref{sub:Blanchfield}). 

\subsection{Milnor signatures}
\label{sub:Milnor}

%
In this subsection, we recall the definition of a pairing which was first described by Milnor~\cite{MilnorInfiniteCyclic}. We then outline how the resulting ``Milnor signatures" are related to (the jumps of) the Levine-Tristram signature. 
\medbreak

Given an oriented knot~$K$ in $S^3$, use $X_K=S^3 \setminus \nu K$ to denote its exterior. The kernel of the abelianization homomorphism $\pi_1(X_K) \to H_1(X_K;\Z)\cong \Z$ gives rise to an infinite cyclic cover~$X_K^\infty \to X_K$.
Milnor showed that the cup product 
$$H^1(X_K^\infty;\R) \times H^1(X_K^\infty,\partial X_K^\infty;\R) \to H^2(X_K^\infty,\partial X_K^\infty;\R) \cong~\R$$
 defines a nonsingular skew-symmetric $\R$-bilinear form~\cite[Assertion 9]{MilnorInfiniteCyclic}. 
Since the canonical inclusion $(X_K,\emptyset) \to (X_K,\partial X_K)$ induces an isomorphism $H^1(X_K^\infty;\R) \to H^1(X_K^\infty,\partial X_K^\infty;\R)$, the aforementioned cup product pairing gives to rise to a nonsingular skew-symmetric form 
$$\cup \colon H^1(X_K^\infty;\R) \times H^1(X_K^\infty;\R) \to \R.$$
Use $t^*$ to denote the automorphism induced on $H^1(X_K^\infty;\R)$ by the generator of the deck transformation group of $X_K^\infty$.
Milnor defines the \emph{quadratic form of $K$} as the pairing 
\begin{align*}
b_K \colon H^1(X_K^\infty;\R) \times &H^1(X_K^\infty;\R) \to \R \\
&(x,y) \mapsto (t^*x)\cup y+(t^*y) \cup x.
 \end{align*}
This pairing is symmetric and nonsingular~\cite[Assertion 10]{MilnorInfiniteCyclic} and Milnor defines the signature of~$K$ as the signature of $b_K$. Erle later related $\operatorname{sign}(b_K)$ to the Murasugi signature of $K$~\cite{Erle}:
 
\begin{theorem}
\label{thm:Erle}
Let $K$ be an oriented knot. The signature of the symmetric form $b_K$ is equal to the Murasugi signature of $K$:
$$ \operatorname{sign}(b_K)=\sigma(K). $$
\end{theorem}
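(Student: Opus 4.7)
The plan is to decompose the Milnor form $b_K$ along the primary decomposition of the $\R[t^{\pm 1}]$-module $H^1(X_K^\infty; \R)$ and to match the signatures of the resulting blocks to the jumps of $\sigma_K$ on the unit circle. First I would fix a Seifert surface $F$ of $K$ with Seifert matrix $A$ and recall that $\sigma(K) = \sigma_K(-1) = \operatorname{sign}(A+A^T)$ by Definition~\ref{def:LevineTristram}. Since $H^1(X_K^\infty; \R)$ is a finitely generated torsion module over the PID $\R[t^{\pm 1}]$ (presented, up to Poincaré duality, by the matrix $tA - A^T$), the structure theorem yields a decomposition $H^1(X_K^\infty; \R) = \bigoplus_p V_p$ indexed by the monic irreducible factors $p(t)$ of $\Delta_K(t)$ in $\R[t]$.

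Next I would analyze how $b_K$ interacts with this decomposition. Because $b_K(x,y) = (t^*x) \cup y + (t^* y) \cup x$ and the cup product is skew-symmetric, a standard argument shows that $b_K$ pairs $V_p$ with $V_q$ nontrivially only when $q$ is the reciprocal polynomial of $p$. When $p$ is not self-reciprocal, the restriction of $b_K$ to $V_p \oplus V_{p^*}$ is hyperbolic and contributes zero to $\operatorname{sign}(b_K)$. The self-reciprocal real irreducible factors having roots on $S^1$ are $t+1$ and the quadratics $t^2 - 2\cos(\theta) t + 1$ with $\theta \in (0,\pi)$; on each such primary summand $V_p$ one diagonalizes $b_K$ on the cyclic pieces $\R[t^{\pm 1}]/p(t)^k$ to extract a well-defined local signature $\sigma_\theta(K)$, the \emph{Milnor signature} at $\omega = e^{i\theta}$.

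Finally I would match these local contributions with the jumps of $\sigma_K$. For each root $\omega_0 = e^{i\theta}$ of $\Delta_K$ on $S^1_*$, I would compare the signature of the Seifert-theoretic Hermitian matrix $(1-\omega)A + (1-\overline{\omega})A^T$ on either side of $\omega_0$, and show that the resulting jump coincides with $\sigma_\theta(K)$ by reducing to the primary block of $tA - A^T$ localized at $\omega_0$. Summing the jumps as $\theta$ ranges over $(0,\pi]$ then gives $\sigma_K(-1) - \lim_{\omega \to 1^+} \sigma_K(\omega)$; by Remark~\ref{rem:Not1}, the latter limit vanishes for a knot, so $\operatorname{sign}(b_K) = \sigma(K)$. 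The main obstacle is precisely this matching step: one has to carry out the local linear-algebraic computations on the modules $\R[t^{\pm 1}]/p(t)^k$ and verify that the Milnor signature of a primary block really does equal the corresponding jump, which in turn requires identifying the intrinsic cup-product pairing on $X_K^\infty$ with the symmetrized Seifert form under the chosen Mayer--Vietoris presentation.
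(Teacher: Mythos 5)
Your outline takes a genuinely different route from the one the paper attributes to Erle, so let me compare them. Erle's argument (summarized in the paper just after Theorem~\ref{thm:Erle}) is direct: he shows that $b_K$ is represented by $W+W^T$, where $W$ is a nonsingular integral matrix S-equivalent to a Seifert matrix $A$ of $K$ (a ``reduced Seifert matrix''). Since S-equivalence does not change $\operatorname{sign}(A+A^T)$, the equality $\operatorname{sign}(b_K)=\operatorname{sign}(A+A^T)=\sigma(K)$ is then immediate, with no primary decomposition and no jump analysis. Your route instead re-derives the theorem as the $\omega=-1$ case of Matumoto's Theorem~\ref{thm:Matumoto}: decompose $b_K$ over the $p_\theta$-primary summands, identify each local signature with the jump of $\sigma_K$ at $e^{i\theta}$, and telescope from $1$ to $-1$ (using that $\sigma_K$ vanishes near $1$ and that $\pm 1$ are not roots of $\Delta_K$). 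This is a legitimate and more informative path --- it aims at the stronger jump formula --- but it is strictly harder than what is needed for the stated equality.

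The real issue is that the step you yourself flag as ``the main obstacle'' --- identifying the intrinsic cup-product pairing on $H^1(X_K^\infty;\R)$ with the symmetrized Seifert form --- is not a deferrable technicality: it is the entire content of the theorem, and it is precisely what Erle's reduced-Seifert-matrix result supplies. Once that identification is in hand globally, the theorem follows in one line and the primary-decomposition and jump apparatus is superfluous; without it, the matching step cannot get off the ground, so as written the proposal assumes what it needs to prove. Two further points deserve care if you pursue your route: first, the jump of $\sigma_K$ at a root $e^{i\theta}$ detects only the odd-exponent cyclic blocks $\R[t^{\pm 1}]/p_\theta(t)^r$ (compare Theorem~\ref{thm:BlanchfieldSignaturesKearton}, where the jump is a sum over odd $r$), so you must verify that the even-$r$ blocks are also hyperbolic for $b_K$ in order for both sides of your proposed local equality to discard them; second, the normalization of the symmetrization $(t^*x)\cup y+(t^*y)\cup x$ must be tracked so that the jump equals $\sigma_\theta(K)$ rather than $2\sigma_\theta(K)$.
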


Next, we describe the so-called Milnor signatures.
Since $\R$ is a field, the ring $\R[t^{\pm 1}]$ is a PID and therefore the torsion $\R[t^{\pm 1}]$-module $H:=H_1(X_K^\infty;\R)$ decomposes as a direct sum over its $p(t)$-primary components, where $p(t)$ ranges over the irreducible polynomials of $\R[t^{\pm 1}]$. \footnote{Here by a \emph{$p(t)$-primary component}, we mean
$ H_{p(t)}=\lbrace x \in H \ | \ p(t)^n x=0 \text{ for some } n> 0 \rbrace$. Observe that~$H_{p(t)} \neq 0$ only if $p(t)$ is a factor of $\Delta_K(t)$.}
As explained in~\cite[proof of Assertion 11]{MilnorInfiniteCyclic}, the symmetric form~$b_K$ decomposes orthogonally once we distinguish symmetric polynomials (i.e. $p(t)=rt^{\pm i} p(t^{-1})$; written $p(t)\stackrel{.}{=}p(t^{-1})$) from non-symmetric ones:
$$(H^1(X_K^\infty;\R),b_K)=\bigoplus_{p(t)\stackrel{.}{=}p(t^{-1})} (H_{p(t)},b_K|_{H_{p(t)}}) \oplus \bigoplus_{p(t) \stackrel{.}{\neq} p(t^{-1})} (H_{p(t)} \oplus H_{p(t^{-1})} , b_K|_{H_{p(t)} \oplus H_{p(t^{-1})}} ).$$
In a nutshell, for $p(t)$ irreducible and symmetric, the restrictions of $b_K|_{H_{p_\theta}(t)}$ produce additional signature invariants. 
If $p(t)$ and $q(t)$ differ by multiplication by a unit, then their corresponding primary summands are equal.
From now on, a polynomial is therefore understood to be symmetric if $p(t)=p(t^{-1})$.
As we are working over~$\R[t^{\pm 1}]$, the irreducible symmetric polynomials are of the form $p_\theta(t)=t-2\operatorname{cos}(\theta)+t^{-1}$ with $0<\theta <\pi$.

\begin{definition}
\label{def:MilnorSignature}
For $0<\theta<\pi$, the \emph{Milnor signature} $\sigma_\theta(K)$ is the signature of the restriction of $b_K$ to the $p_\theta(t)$-primary summand of $H:=H^1(X_K^\infty;\R)$:
$$\sigma_\theta(K):=\operatorname{sign}(b_K|_{H_{p_\theta(t)}}).$$
\end{definition}

Note that $\sigma_\theta(K)$ is zero if $p_\theta(t)$ does not divide the Alexander polynomial $\Delta_K(t)$ of $K$.
In particular, by Erle's result, the Murasugi signature $\sigma(K)$ is equal to the sum of the $\sigma_\theta(K)$ over all $\theta$ such that $p_\theta(t)$ divides $\Delta_K(t)$. Thus, recalling that $\pm 1$ can not be a root of the Alexander polynomial of a knot, one can write
\begin{equation}
\label{eq:MurasugiMilnor}
 \sigma(K)=\sum_{0 < \theta < \pi} \sigma_\theta(K)=\sum_{ \lbrace \theta  \colon  p_\theta | \Delta_K \rbrace  } \sigma_\theta(K). 
 \end{equation}
Next, following Matumoto, we relate the Milnor signatures to the Levine-Tristram signatures~\cite{Matumoto}. First, note that Erle proves a stronger result than the equality $\sigma(K)=\operatorname{sign}(b_K)$: indeed he shows that $b_K$ is represented by $W+W^T$, where $W$ is a nonsingular matrix over $\Z$ which is S-equivalent to a Seifert matrix of $K$; he calls such a matrix a \emph{reduced Seifert matrix}~\cite[Section 3.4]{Erle}. As a consequence, Matumoto considers an arbitrary nonsingular bilinear form on a $\R$-vector space $V$, represented by a matrix $A$ and compares the signature of $(1-\omega)A+(1-\overline{\omega})A^T$ (for $\omega \in S^1$) with the signatures of $A+A^T$ restricted to the $p(t)$-primary summands of $V$ (here $t$ is thought alternatively as an indeterminate and as the $\R$-automorphism $(A^T)^{-1}A$)~\cite{Matumoto}).

A particular case of one of Matumoto's results can be now be stated as follows~\cite[Theorem~2]{Matumoto}.

\begin{theorem}
\label{thm:Matumoto}
Let $K$ be an oriented knot and let $\omega=e^{i \varphi}$ with $0<\varphi \leq \pi$. If the automorphism~$t^*$ is semisimple or if $\omega$ is not a root of $\Delta_K(t)$, then the following equality holds:
$$ \sigma_K(\omega)=\sum_{0 < \theta <\varphi} \sigma_\theta(K)+ \frac{1}{2} \sigma_\varphi(K).$$
\end{theorem}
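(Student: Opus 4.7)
The plan is to view $\omega\mapsto\sigma_K(\omega)$ as a piecewise constant function on $S^1$ whose jumps are controlled by the Milnor signatures $\sigma_\theta(K)$, and to recover the formula by summing the jumps encountered as $\omega$ moves from $1$ to $e^{i\varphi}$. First, I would invoke Erle's theorem to replace Milnor's data by a nonsingular reduced Seifert matrix $W$: then $b_K$ is represented on $V:=\R^n$ by $W+W^T$, the automorphism playing the role of $t^*$ can be taken to be $t:=(W^T)^{-1}W$, and both forms factor through $W^T$ via
\[
W+W^T=W^T(t+I),\qquad H(\omega):=(1-\omega)W+(1-\bar\omega)W^T=W^T\bigl((1-\omega)t+(1-\bar\omega)I\bigr).
\]
Milnor's orthogonality lemma, applied to both forms, shows that they split orthogonally along the primary decomposition of $V_\C$ under $t$: the non-symmetric pairs $V_p\oplus V_{p^*}$ carry hyperbolic contributions (and hence zero signature), the eigenvalue-$1$ summand is trivial for a knot because $\Delta_K(1)=\pm 1$, and the remaining signature sits on the symmetric pieces $V_{p_\theta(t)}\otimes\C$ for $\theta\in(0,\pi)$.

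Second, I would analyze each $V_{p_{\theta_0}(t)}$ separately. The scalar polynomial $f_\omega(z):=(1-\omega)z+(1-\bar\omega)$ vanishes at $z=e^{\pm i\theta_0}$ if and only if $\varphi=\theta_0$, so the restriction $H(\omega)|_{V_{p_{\theta_0}(t)}}=W^T f_\omega(t)|_{V_{p_{\theta_0}(t)}}$ is nonsingular precisely when $\varphi\ne\theta_0$, and its signature is therefore locally constant on each open arc of $S^1_*$ between consecutive roots of $\Delta_K(t)$. Under the semisimplicity hypothesis on $t|_{V_{p_{\theta_0}(t)}}$ (or when $\varphi$ avoids every root of $\Delta_K$), I would complexify, pair $E_{e^{i\theta_0}}\oplus E_{e^{-i\theta_0}}$, and diagonalize the resulting $2\times 2$ Hermitian blocks of $H(\omega)$. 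A direct trigonometric computation on such a block then shows that $\sign(H(\omega)|_{V_{p_{\theta_0}(t)}})$ jumps by exactly $\sigma_{\theta_0}(K)$ as $\varphi$ crosses $\theta_0$ counterclockwise, while transversality of the eigenvalue crossing (guaranteed by semisimplicity) forces the value at the jump itself to equal the mean of the two one-sided limits.

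Combining these and starting from $\sigma_K(\omega)=0$ on a small arc of $S^1$ based at $1$ (Remark~\ref{rem:Not1}), I would integrate the jumps along $\omega=e^{i\eta}$ with $\eta\in(0,\varphi]$ to obtain the asserted identity; the half-term $\tfrac12\sigma_\varphi(K)$ is precisely the averaging at the endpoint jump and vanishes when $\omega$ is not a root of $\Delta_K$, which is why the semisimplicity hypothesis may then be dropped. The main obstacle I expect is the local Hermitian computation on $V_{p_{\theta_0}(t)}$: one must verify that the jump has exactly the sign $+\sigma_{\theta_0}(K)$ rather than the opposite, that its size is $|\sigma_{\theta_0}(K)|$, and, in the non-semisimple case at a non-root $\omega$, that higher Jordan blocks do not contribute spurious jumps away from $\varphi$. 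The factorization $H(\omega)=W^T f_\omega(t)$, combined with Milnor's orthogonality, is what controls each of these points.
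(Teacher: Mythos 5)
Your outline follows precisely the route the survey attributes to Matumoto (the paper itself offers no proof beyond citing \cite[Theorem 2]{Matumoto} and sketching his setup): replace $b_K$ by Erle's nonsingular reduced Seifert matrix $W+W^T$, let $t=(W^T)^{-1}W$ play the role of $t^*$, decompose orthogonally along its primary decomposition, and track the jump and the averaged value of $\operatorname{sign}\bigl(H(\omega)|_{V_{p_{\theta_0}(t)}}\bigr)$ as $\varphi$ crosses each $\theta_0$. The factorization $H(\omega)=W^T\bigl((1-\omega)t+(1-\bar{\omega})I\bigr)$, the vanishing on non-symmetric pairs and on the eigenvalue-$\pm1$ part, and the jump/averaging analysis are all correct, so this is essentially the same argument.
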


Observe that if $\omega=e^{i\varphi}$ is not a root of $\Delta_K(t)$, then the Milnor signature $\sigma_\varphi(K)$ vanishes. In particular, since $-1$ is never a root of the Alexander polynomial of a knot, Theorem~\ref{thm:Matumoto} recovers~\eqref{eq:MurasugiMilnor}. The Milnor pairing can also be considered over $\C$ in which case the statement is somewhat different~\cite[Theorem 1]{Matumoto}. 
Informally, Theorem~\ref{thm:Matumoto} states that the Milnor signatures measure the jumps of $\sigma_K \colon S^1 \to \Z$ at the roots of $\Delta_K(t)$ which lie on $S^1$.
The situation for links is more complicated~\cite{KawauchiQuadraticFormLink}; see also~\cite{KawauchiQuadraticForm3Manifolds}.

We conclude by mentioning some further properties of the Milnor signatures.
\begin{remark}
\label{rem:MilnorSignatureProperties}
The Milnor signatures are concordance invariants~\cite[p.129]{MilnorInfiniteCyclic}. Milnor establishes this result by showing that his signatures vanish on slice knots and are additive under connected sums.
A satellite formula for the Milnor signatures is stated without proof in~\cite{KeartonSatel}.
\end{remark}

\subsection{Signatures via the Blanchfield pairing}
\label{sub:Blanchfield}

In this subsection, we review how the Levine-Tristram signature of a knot can be recovered from the Blanchfield pairing. Note that while the Blanchfield pairing is known to determine the S-equivalence type of $K$~\cite{TrotterSequivalence}, the approaches we discuss here are arguably more concrete.
\medbreak

Given an oriented knot $K$, recall that $X_K^\infty$ denotes the infinite cyclic cover of the exterior $X_K$. 
 Since $\Z=\langle t \rangle$ acts on $X_K^\infty$, the homology group $H_1(X_K^\infty;\Z)$ is naturally endowed with a $\Z[t^{\pm 1}]$-module structure. This $\Z[t^{\pm 1}]$-module is called the \emph{Alexander module} and is known to be finitely generated and torsion~\cite{LevineModule}. 
Using $\Q(t)$ to denote the field of fractions of $\Z[t^{\pm 1}]$, the \emph{Blanchfield form} of a knot is a Hermitian and nonsingular sesquilinear pairing 
$$\operatorname{Bl}_K \colon H_1(X_K^\infty;\Z) \times H_1(X_K^\infty;\Z) \to \Q(t)/\Z[t^{\pm 1}].$$
In order to define $\operatorname{Bl}_K$, we describe its adjoint $\operatorname{Bl}_K^{\bullet} \colon  H_1(X_K^\infty;\Z)  \to \overline{\operatorname{Hom}_{\Z[t^{\pm 1}]}(H_1(X_K^\infty;\Z),\Q(t)/\Z[t^{\pm 1}])}$ so that $\operatorname{Bl}_K (x,y)=\operatorname{Bl}_K ^{\bullet}(y)(x)$. \footnote{Given a ring $R$ with involution, and given an $R$-module $M$, we denote by $\overline{M}$ the $R$-module that has the same underlying additive group as $M$, but for which the action by $R$ on $M$ is precomposed with the involution on $R$.} Using local coefficients, the Alexander module can be written as~$H_1(X_K;\Z[t^{\pm 1}])$. The short exact sequence $0 \to \Z[t^{\pm 1}] \to \Q(t) \to  \Q(t)/\Z[t^{\pm 1}] \to 0$ of coefficients gives rise to a Bockstein homomorphism $\operatorname{BS} \colon H^1(X_K;\Q(t)/\Z[t^{\pm 1}]) \to H^2(X_K;\Z[t^{\pm 1}] )$. Since the Alexander module is torsion, $\operatorname{BS} $ is in fact an isomorphism.  Composing the map induced by the inclusion $\iota \colon (X_K,\emptyset) \to (X_K,\partial X_K)$ with Poincar\'e duality, $\operatorname{BS}^{-1}$ and the Kronecker evaluation map yields the desired $\Z[t^{\pm 1}]$-linear map:
\begin{align}
\label{eq:BlanchfieldDef}
\operatorname{Bl}_K^\bullet \colon  H_1(X_K;\Z[t^{\pm 1}]) & \stackrel{\iota_*}{\to} H_1(X_K,\partial X_K;\Z[t^{\pm 1}])  \stackrel{\operatorname{PD}}{\to} H^2(X_K;\Z[t^{\pm 1}]) \\
 & \stackrel{\operatorname{BS}^{-1}}{\longrightarrow} H^1(X_K;\Q(t)/\Z[t^{\pm 1}]) \stackrel{\operatorname{ev}}{\to} \overline{\operatorname{Hom}_{\Z[t^{\pm 1}]}(H_1(X_K;\Z[t^{\pm 1}]),\Q(t)/\Z[t^{\pm 1}])}. \nonumber
 \end{align}
Following Kearton~\cite{KeartonSignature, KeartonQuadratic}, we outline how 
signatures can be extracted from the (real) Blanchfield pairing
$\Bl_K \colon H_1(X_K;\R[t^{\pm 1}]) \times H_1(X_K;\R[t^{\pm 1}]) \to \R(t)/\LR$.
Let~$p(t)$ be a real irreducible symmetric factor of $\Delta_K(t)$, and let $H_{p(t)}$ be the $p(t)$-primary summand of $H:=H_1(X_K;\R[t^{\pm 1}])$. 
There is a decomposition $H_{p(t)}=\bigoplus_{i=1}^m H_{p(t)}^r$, where each $H_{p(t)}^r$ is a free module over $\LR/p(t)^r \LR$. For $i=1,\ldots, m$, consider the quotient
$$ V_{p(t)}^r:=H_{p(t)}^r/p(t)H_{p(t)}^r$$
as a vector space over $\C \cong \R(\xi) \cong \LR/p(t)\LR$, where $\xi$ is a root of $p(t)$.
The Blanchfield pairing $\Bl_K$ now induces the following well defined Hermitian pairing:
\begin{align*}
\bl_{r,p(t)}(K) \colon V_{p(t)}^r \times V_{p(t)}^r &\to \C \\
([x],[y]) &\mapsto \Bl_K(p(t)^{r-1}x,y).
\end{align*}
As above, we write $p_\theta(t)=t-2\operatorname{cos}(\theta)+t^{-1}$: this way for each $\theta \in (0,\pi)$ and every integer $r$, we obtain additional signature invariants.

\begin{definition}
\label{def:BlanchfieldSignature}
For $0<\theta <\pi$ and $r>0$, the \emph{Blanchfield signature} $\sigma_{r,\theta}(K)$ is the signature of the Hermitian pairing $\bl_{r,p_{\theta}(t)}(K)$:
$$\sigma_{r,\theta}(K):=\operatorname{sign}(\bl_{r,p_{\theta}(t)}(K)).$$
\end{definition}

Kearton~\cite[Section~9]{KeartonSignature} relates the signatures $\sigma_{r,\theta}(K)$ to the Milnor signatures, while Levine relates the $\sigma_{r,\theta}(K)$ to the Levine-Tristram signature~\cite[Theorem 2.3]{LevineMetabolicHyperbolic}:
 
\begin{theorem}
\label{thm:BlanchfieldSignaturesKearton}
Given an oriented knot $K$ and $0<\theta<\pi$, one has
$$ \sigma_\theta(K)=\sum_{r \text{ odd}} \sigma_{r,\theta}(K).$$
If $\omega:=e^{i\theta}$ is a root of $\Delta_K(t)$, and if $\omega_+,\omega_- \in S^1_* \setminus \lbrace \omega \in S^1_* \ | \ \Delta_K(\omega)=0 \rbrace$ are such that $\omega$ is the only root of~$\Delta_K(t)$ lying on an arc of~$S^1$ connecting them, then
\begin{align*}
&\sigma_K(\omega^+)-\sigma_K(\omega^-)=2 \sum_{r \text{ odd}} \sigma_{r,\theta}(K), \\
&\sigma_K(\omega)=\frac{1}{2}(\sigma_K(\omega^+)-\sigma_K(\omega^-))-\sum_{r \text{ even}} \sigma_{r,\theta}(K).
\end{align*}
\end{theorem}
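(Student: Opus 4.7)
The plan is to pass to the real Alexander module $H := H_1(X_K; \LR)$, decompose everything into $p_\theta(t)$-primary and elementary-divisor summands $H_{p_\theta(t)}^r$ as in the setup, and prove the three assertions by comparing the Milnor form $b_K$, the Blanchfield form $\Bl_K$, and the Hermitian form $H(\omega) := (1-\omega)A + (1-\overline{\omega})A^T$ associated to a Seifert matrix $A$, one summand at a time. The structure theorem for nonsingular Hermitian forms over the PID $\LR$ provides a preferred basis of each $H_{p_\theta(t)}^r$ in which $\Bl_K$ is diagonal with entries of the form $u_i/p_\theta(t)^r$ with $u_i \in \R^\times$; this is the bookkeeping we use throughout.

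For the first identity, following Kearton~\cite[Section~9]{KeartonSignature}, I would relate $b_K$ to $\Bl_K$ directly. Universal coefficients and Poincar\'e duality produce a canonical isomorphism $H^1(X_K^\infty; \R) \cong H$ under which $b_K$ is identified with the pairing
$$(x,y) \mapsto \operatorname{Tr}\bigl((t - t^{-1}) \Bl_K(x,y)\bigr),$$
where $\operatorname{Tr} \colon \R(t)/\LR \to \R$ is the sum-of-residues trace. On an even-$r$ block, a direct residue calculation shows that the sub-lattice $p_\theta(t)^{r/2} H_{p_\theta(t)}^r$ is a Lagrangian for $b_K$, so the contribution to $\sigma_\theta(K)$ vanishes. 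On an odd-$r$ block, the same residue computation identifies $b_K$ with a positive rescaling of $\bl_{r, p_\theta(t)}(K)$, so the contribution is exactly $\sigma_{r,\theta}(K)$, yielding $\sigma_\theta(K) = \sum_{r \text{ odd}} \sigma_{r,\theta}(K)$.

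For the jump formula, I would analyse the local behaviour of the family $H(\omega)$ as $\omega$ crosses $e^{i\theta}$ along $S^1$. The difference $\sigma_K(\omega^+) - \sigma_K(\omega^-)$ equals twice the signature of the Hermitian residue form on $\ker H(e^{i\theta})$ obtained by differentiating $H(\omega)$ transversely to $S^1$. By Trotter's theorem~\cite{TrotterSequivalence} that $\Bl_K$ determines the S-equivalence class of $A$, this residue form splits along the elementary-divisor decomposition: odd-$r$ blocks contribute their full signature $\sigma_{r, \theta}(K)$, whereas each even-$r$ block produces a pair of eigenvalues of $H(\omega)$ crossing zero in opposite directions (a hyperbolic crossing), contributing $0$ to the jump. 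This proves the second equation. The third equation then follows from the same local model by computing the signature of the nonsingular part of $H(e^{i\theta})$ on each block and comparing it to the two one-sided limits; the even-$r$ blocks, although invisible to the jump, show up here because on such a block the kernel of $H(e^{i\theta})$ supports a residual form whose signature is precisely $\sigma_{r, \theta}(K)$.

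The main obstacle I anticipate is this final bookkeeping step: correctly identifying the contribution of even-$r$ blocks to $\sigma_K(e^{i\theta})$ itself, with the precise sign demanded by the third equation. This is essentially Levine's local calculation at a root of the Alexander polynomial~\cite[Theorem 2.3]{LevineMetabolicHyperbolic}; the delicate point is to match the sign conventions used to define $b_K$, $\Bl_K$, $\bl_{r, p_\theta(t)}(K)$ and $H(\omega)$ so that all three equations come out with compatible signs. Once those normalisations are aligned, every assertion of the theorem reduces to a short residue calculation on each elementary-divisor block of the real Blanchfield pairing.
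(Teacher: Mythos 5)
First, note that the survey itself contains no proof of this theorem: it is stated with attributions only, the first identity to Kearton~\cite[Section~9]{KeartonSignature} and the two jump formulas to Levine~\cite[Theorem 2.3]{LevineMetabolicHyperbolic}. Your outline follows the same two-pronged route as those sources (a trace/residue comparison of $b_K$ with the real Blanchfield form for the first identity, and a local analysis of the family $H(\omega)=(1-\omega)A+(1-\overline{\omega})A^T$ near $\omega=e^{i\theta}$ for the other two), and the devissage for the first identity --- even-$r$ blocks carry a Lagrangian, odd-$r$ blocks contribute $\sigma_{r,\theta}(K)$ --- is the correct mechanism, though the precise formula $b_K(x,y)=\operatorname{Tr}((t-t^{-1})\Bl_K(x,y))$ is asserted rather than derived and is exactly where the sign normalisation you worry about lives.

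The genuine gap is in your local model for the jump. A $p_\theta(t)$-primary block $H^r_{p_\theta(t)}$ of exponent $r$ contributes exactly \emph{one} vanishing eigenvalue of $H(e^{i\theta})$ (the nullity $\eta_K(e^{i\theta})=\dim_\C H_1(X_K;\C_\omega)$ counts blocks, not $\sum r_i$), and that single eigenvalue vanishes to order $r$ in $\eta$ as $\omega=e^{i\eta}$ passes through $e^{i\theta}$. Your claim that an even-$r$ block produces ``a pair of eigenvalues crossing zero in opposite directions'' is therefore wrong, and it is not a harmless inaccuracy: a hyperbolic pair would contribute $0$ to both one-sided limits \emph{and} to $\sigma_K(\omega)$ itself, so no correction term $-\sum_{r\text{ even}}\sigma_{r,\theta}(K)$ could arise, contradicting the third equation you are trying to prove. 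The correct picture is a single eigenvalue with an even-order zero: it has the same sign on both sides of $\theta$ (hence contributes $0$ to the jump but $\pm 1$ to each one-sided limit) and vanishes at $\theta$ itself, which is precisely what produces the even-$r$ correction. Relatedly, computing the jump by ``differentiating $H(\omega)$ transversely'' only detects first-order crossings, i.e.\ $r=1$ blocks; for higher odd $r$ you need the full order-$r$ local normal form. Finally, Trotter's theorem is not the right tool for splitting $H(\omega)$ along the elementary-divisor decomposition; what you want is a representing matrix adapted to the primary decomposition, e.g.\ via Erle's reduced Seifert matrix or the representability statement of Proposition~\ref{prop:BorodzikFriedl}.
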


As we already mentioned in the previous subsection, Theorem~\ref{thm:BlanchfieldSignaturesKearton} (and Theorem~\ref{thm:Matumoto}) shows that the Blanchfield and Milnor signatures measure the jumps of $\sigma_K \colon S^1 \to \Z$ at the roots of~$\Delta_K(t)$.

Next, we mention some further properties of the Blanchfield  signatures. 
\begin{remark}
\label{rem:BlanchfieldSignatureProperties}
For each $0<\theta<\pi$, the sum $\sum_{r \text{ odd}} \sigma_{r,\theta}(K)$ of Blanchfield signatures is a concordance invariant: this can either be seen directly~\cite{LevineMetabolicHyperbolic} or by relating this sum to the Milnor signature~$\sigma_\theta(K)$ (recall Theorem~\ref{thm:BlanchfieldSignaturesKearton}) and using its concordance invariance (recall Remark~\ref{rem:MilnorSignatureProperties}). 
Combining this fact with Theorem~\ref{thm:BlanchfieldSignaturesKearton} yields a proof that the Levine-Tristram signature function~$\sigma_K$ vanishes away from the roots of $\Delta_K$ if $K$ is (algebraically) slice (recall Subsection~\ref{sub:Concordance}).

While the Blanchfield signatures $\sigma_{r,\theta}(K)$ are not concordance invariants, they do vanish if~$K$ is doubly slice~\cite{KeartonDouble, LevineMetabolicHyperbolic}. Combining this fact with Theorem~\ref{thm:BlanchfieldSignaturesKearton} yields a proof that the Levine-Tristram signature function $\sigma_K$ vanishes identically if~$K$ is doubly slice (recall Remark~\ref{rem:DoublySlice}).
\end{remark}

Next,  following Borodzik-Friedl, we describe a second way of extracting signatures from the Blanchfield pairing~\cite{BorodzikFriedl}.
The Blanchfield pairing is known to be \emph{representable}: as shown in~\cite[Proposition 2.1]{BorodzikFriedl} there exists a non-degenerate Hermitian matrix $A(t)$ over $\Z[t^{\pm 1}]$ such that $\Bl_K$ is isometric to the pairing
\begin{align*}
\lambda_{A(t)} \colon  \operatorname{coker}(A(t)^T) \times \operatorname{coker}(A(t)^T)   &\to \Q(t)/\Z[t^{\pm 1}] \\
([x],[y]) & \mapsto x^TA(t)^{-1}\overline{y}.
\end{align*}
In this case, we say that the Hermitian matrix $A(t)$ \emph{represents} $\Bl_K$. 
These representing matrices provide an alternative way of defining the Levine-Tristram signature~\cite[Lemma 3.2]{BorodzikFriedl}:

\begin{proposition}
\label{prop:BorodzikFriedl}
Let $K$ be an oriented knot and let $\omega \in S^1$. For any Hermitian matrix~$A(t)$ which represents the Blanchfield pairing $\operatorname{Bl}_K$, the following equalities hold:
\begin{align*}
&\sigma_K(\omega)=\operatorname{sign}(A(\omega))-\operatorname{sign}(A(1)), \\
&\eta_K(\omega)=\operatorname{null}(A(\omega)).
\end{align*}
\end{proposition}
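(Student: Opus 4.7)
The plan is to reduce the proof to two independent steps: show that $\operatorname{sign}(A(\omega)) - \operatorname{sign}(A(1))$ and $\operatorname{null}(A(\omega))$ depend only on the isometry class of $\Bl_K$ (not on the choice of representing matrix), and then verify the formulas for one explicit $A(t)$. To establish well-definedness, I would appeal to the standard Witt-theoretic fact that any two Hermitian matrices over $\Z[t^{\pm 1}]$ representing $\Bl_K$ are related by a sequence of moves of two types: congruence $A(t) \mapsto P(t)^T A(t) \overline{P(t)}$ with $P \in \operatorname{GL}_n(\Z[t^{\pm 1}])$, and stabilization $A(t) \mapsto A(t) \oplus U(t)$ with $U(t)$ a Hermitian matrix whose determinant is a unit in $\Z[t^{\pm 1}]$ (necessarily of the form $\pm t^k$). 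A congruence preserves $\operatorname{sign}(A(\omega))$, $\operatorname{null}(A(\omega))$ and $\operatorname{sign}(A(1))$, since $|\det P(\omega)| = 1$ forces $P(\omega) \in \operatorname{GL}_n(\C)$ while $P(1) \in \operatorname{GL}_n(\Z) \subset \operatorname{GL}_n(\R)$. A unimodular stabilization contributes $\operatorname{sign}(U(\omega)) - \operatorname{sign}(U(1))$, which vanishes: $U(\omega)$ is non-singular Hermitian for every $\omega \in S^1$ (since $|\det U(\omega)| = 1$), so its signature is locally constant on the connected circle and coincides with $\operatorname{sign}(U(1))$; similarly $\operatorname{null}(U(\omega)) = 0$. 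Both claimed quantities are therefore well defined as functions of $\Bl_K$.

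It then suffices to verify the formulas for one choice of representing matrix, for instance the one constructed in~\cite[Proposition 2.1]{BorodzikFriedl} from a Seifert matrix of $K$. Since $A(t)$ presents the Alexander module one has $\det A(t) \doteq \Delta_K(t)$, and the knot normalization $\Delta_K(1) = \pm 1$ ensures $A(1)$ is non-singular; by continuity $A(\omega)$ remains non-singular with $\operatorname{sign}(A(\omega)) = \operatorname{sign}(A(1))$ in a punctured neighborhood of $1 \in S^1$. Combined with the vanishing of $\sigma_K$ and $\eta_K$ near $1$ (Remark~\ref{rem:Not1}), this establishes both identities locally around $\omega = 1$. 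Globalization to all of $S^1$ is then a matter of matching jumps at the zeros of $\Delta_K(t)$: both $\omega \mapsto \operatorname{sign}(A(\omega))$ and $\omega \mapsto \sigma_K(\omega)$ are locally constant off these roots, and at a root $\omega_0 = e^{i\theta}$ the jump of $\operatorname{sign}(A(\omega))$ is a local invariant of the Hermitian pencil $A(t)$, depending only on the $p_\theta(t)$-primary summand of the linking form $\lambda_{A(t)} \cong \Bl_K$. A local normal form for Hermitian matrices with $p_\theta(t)^r$-torsion cokernel shows that each rank-one block contributes $\pm 2$ when $r$ is odd and $0$ when $r$ is even, so the jump equals $2\sum_{r \text{ odd}} \sigma_{r,\theta}(K)$, which by Theorem~\ref{thm:BlanchfieldSignaturesKearton} is precisely the jump of $\sigma_K$ at $\omega_0$. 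Propagating along $S^1$ then yields the signature identity off the root set, and the value at each root follows from the corresponding formula in Theorem~\ref{thm:BlanchfieldSignaturesKearton}. The nullity identity runs in parallel: $\operatorname{null}(A(\omega))$ vanishes off the roots, and at $\omega_0$ the identification $\operatorname{coker}(A(t)^T) \cong H_1(X_K; \Z[t^{\pm 1}])$ together with base change along $\Z[t^{\pm 1}] \to \C_{\omega_0}$ gives $\operatorname{null}(A(\omega_0)) = \dim_\C H_1(X_K; \C_{\omega_0}) = \eta_K(\omega_0)$ using Proposition~\ref{prop:PropertiesNullity}(7).

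The main obstacle is the local jump computation underlying the match of jumps: showing that each odd-order $p_\theta(t)$-block in the Witt decomposition of $\Bl_K$ produces a $\pm 2$ shift in $\operatorname{sign}(A(\omega))$ as $\omega$ crosses $\omega_0$, while each even-order block produces none. This is a local calculation with Hermitian forms over $\R[t^{\pm 1}]$ at a root of a symmetric irreducible polynomial, and is the technical heart of the argument.
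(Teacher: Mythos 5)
The paper itself gives no proof of Proposition~\ref{prop:BorodzikFriedl}: it is quoted directly from~\cite[Lemma 3.2]{BorodzikFriedl}. Your overall architecture --- show that $\operatorname{sign}(A(\omega))-\operatorname{sign}(A(1))$ and $\operatorname{null}(A(\omega))$ depend only on the isometry class of $\operatorname{Bl}_K$, then check a single representative --- is the right one, and is essentially how the cited proof is organized. Two points, however. The lesser one: the ``standard Witt-theoretic fact'' that any two Hermitian matrices over $\Z[t^{\pm 1}]$ presenting isometric linking forms differ by congruences and unit-determinant Hermitian stabilizations is not standard. Over $\Z[t^{\pm 1}]$, which is not a PID and carries a nontrivial involution, this stable-congruence statement is a substantive theorem and is precisely the main algebraic input of the Borodzik--Friedl machinery; invoking it without proof or reference makes the well-definedness step rest on the hardest part of the argument. (The formal consequences you draw from it --- Sylvester invariance under congruence, constancy of $\operatorname{sign}(U(\omega))$ on the connected circle for unimodular $U$ --- are correct.)

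The more serious problem is that your ``verification for one explicit $A(t)$'' never uses the explicit form of that $A(t)$. Instead you run a local-to-global argument matching jumps at the roots of $\Delta_K$, whose pivot --- that the jump of $\operatorname{sign}(A(\omega))$ across $e^{i\theta}$ equals $2\sum_{r\text{ odd}}\sigma_{r,\theta}(K)$ \emph{with the correct sign} --- is exactly the step you defer to an unproved ``local normal form''. A signature identity cannot be established while each block's contribution is left as an undetermined $\pm 2$; and the normal form you would need amounts to Theorem~\ref{thm:BlanchfieldSignaturesKearton} itself (Levine's jump formula), so the route is both circular in flavor and strictly harder than the proposition being proved. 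Once well-definedness is in hand, the intended verification is direct: the representative of~\cite[Proposition 2.1]{BorodzikFriedl} is built from a Seifert matrix $V$, and evaluating it at $\omega\in S^1$ produces a Hermitian complex matrix whose signature and nullity reduce, via Definition~\ref{def:LevineTristram}, to $\sigma_K(\omega)$ (after subtracting the constant $\operatorname{sign}(A(1))$) and $\eta_K(\omega)$; no jump analysis is needed. Your nullity argument via $\operatorname{coker}(A(t)^T)\otimes_{\Z[t^{\pm1}]}\C_\omega$ and item (7) of Proposition~\ref{prop:PropertiesNullity} is correct for $\omega\neq 1$ and already bypasses the jump analysis --- the signature identity should be handled in the same spirit.
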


In the case of links, even though the Blanchfield form can be defined in a way similar to~\eqref{eq:BlanchfieldDef}, no generalization of Proposition~\ref{prop:BorodzikFriedl} appears to be known at the time of writing. Similarly, the Blanchfield signatures described in Definition~\ref{def:BlanchfieldSignature} do not appear to have been generalized to links.

\section{Two additional constructions}
\label{sec:Other}

We briefly discuss two additional constructions of the Levine-Tristram signature. In Subsection~\ref{sub:CassonLin}, we review a construction (due to Lin~\cite{Lin}) which expresses the Murasugi signature of a knot as a signed count of traceless $\SU(2)$-representations. In Subsection~\ref{sub:GG}, we discuss Gambaudo and Ghys' work, a corollary of which expresses the Levine-Tristram signature in terms of the Burau representation of the braid group and the Meyer cocycle.

\subsection{The Casson-Lin invariant}
\label{sub:CassonLin}

Let $K$ be an oriented knot. Inspired by the construction of the Casson invariant, Lin defined a knot invariant $h(K)$ via a signed count of conjugacy classes of traceless irreducible representations of $\pi_1(X_K)$ into $\operatorname{SU}(2)$~\cite{Lin}. Using the behavior of $h(K)$ under crossing changes, Lin additionally showed that $h(K)$ is equal to half the Murasugi signature~$\sigma(K)$. The goal of this subsection is to briefly review Lin's construction and to mention some later generalizations.
\medbreak
Let $X$ be a topological space. The \emph{representation space} of $X$ is the set $R(X):=\operatorname{Hom}(\pi_1(X),\SU(2))$ endowed with the compact open topology. A representation is \emph{abelian} if its image is an abeliean subgroup of $\SU(2)$ and we let~$S(X)$ denote the set of abelian representations. Note that an $\SU(2)$-representation is abelian if and only if it is reducible. The group $\SU(2)$ acts on $R(X)$ by conjugation and its turns out that $\operatorname{SO}(3)=\SU(2)/ \pm \id$ acts freely and properly on the set~$R(X) \setminus S(X)$ of irreducible (i.e. non abelian) representations. The space of conjugacy classes of irreducible~$\SU(2)$-representations of $X$ is denoted by 
$$ \widehat{R}(X)=(R(X) \setminus S(X))/\operatorname{SO}(3).$$
Given an oriented knot $K$ whose exterior is denoted $X_K$, the goal is now to make sense of a signed count of the elements $\widehat{R}(X_K)$. The next paragraphs outline the idea underlying Lin's constrution.

The braid group $B_n$ can be identified with the group of isotopy classes of orientation preserving homeomorphisms of the punctured disk $D_n$ that fix the boundary pointwise. In particular, each braid $\beta$ can be represented by a homeomorphism $h_\beta \colon D_n \to D_n$ which in turn induces an automorphism of the free group $F_n \cong\pi_1(D_n)$. In turn, since $R(D_n) \cong \SU(2)^n$, the braid $\beta$ gives rise to a self-homeomorphism $\beta \colon \SU(2)^n \to \SU(2)^n$. We can therefore consider the spaces
\begin{align*}
&\Lambda_n =\lbrace (A_1,\ldots,A_n,A_1,\ldots,A_n) \ | \ A_i \in \operatorname{SU}_2^n, \ \operatorname{tr}(A_i)=0 \rbrace, \\
&\Gamma_n =\lbrace (A_1,\ldots,A_n,\beta(A_1),\ldots,\beta(A_n)) \ | \ A_i \in \operatorname{SU}_2^n, \ \operatorname{tr}(A_i)=0 \rbrace.
\end{align*}
Use $\widehat{\beta}$ to denote the link obtained as the closure of a braid $\beta$. The representation space  $R^0(X_{\widehat{\beta}})$ of traceless $\SU(2)$ representations of $\pi_1(X_{\widehat{\beta}})$ can be identified with $\Lambda_n \cap \Gamma_n$ i.e. the fixed point set of the homeomorphism $\beta \colon \SU(2)^n \to \SU(2)^n$~\cite[Lemma 1.2]{Lin}. Therefore, Lin's idea is to make sense of an algebaic intersection of $\Lambda_n$ with $\Gamma_n$ inside the ambient space
$$ H_n=\lbrace (A_1,\ldots,A_n,B_1,\ldots,B_n) \in \SU(2)^n \times \SU(2)^n, \ \tr(A_i)=\tr(B_i)=0 \rbrace. $$
Next, we briefly explain how Lin manages to make sense of this algebraic intersection number. The space $\SU(2) \cong S^3$ is $3$-dimensional and the subspace of traceless matrices is homeomorphic to a $2$-dimensional sphere. As a consequence, $\Lambda_n$ and $\Gamma_n$ are both $2n$-dimensional smooth compact manifolds, and Lin shows that $\widehat{H}_n$ is $4n-3$ dimensional~\cite[Lemma 1.5]{Lin}. The $\operatorname{SO}(3)$ action descends to the spaces~$\Lambda_n,\Gamma_n,H_n$ and one sets
$$ \widehat{H}_n=H_n/\operatorname{SO}(3), \ \ \widehat{\Lambda}_n=\Lambda_n/\operatorname{SO}(3), \ \ \widehat{\Gamma}_n=\Gamma_n/\operatorname{SO}(3).\ \ $$
After carefully assigning orientations to these spaces, it follows that $\widehat{\Lambda}_n, \widehat{\Gamma}_n$ are half dimensional smooth oriented submanifolds of the smooth oriented manifold $\widehat{H}_n$. The intersection $\widehat{\Lambda}_n \cap  \widehat{\Gamma}_n$ is compact whenever $\widehat{\beta}$ is a knot~\cite[Lemma 1.6]{Lin} and therefore, after arranging transversality, one can define the \emph{Casson-Lin invariant} of the braid $\beta$ as the algebraic intersection
$$h(\beta):=\langle \widehat{\Lambda}_n,  \widehat{\Gamma}_n \rangle_{\widehat{H}_n}.$$
Lin proves the invariance of $h(\beta)$ under the Markov moves and shows that the resulting knot invariant is equal to half the Murasugi signature~\cite[Theorem 1.8 and Corollary 2.10]{Lin}:

\begin{theorem}
\label{thm:LinTheorem}
The Casson-Lin invariant $h(\beta)$ is unchanged under the Markov moves and thus, setting $h(K)=h(\widehat{\beta})$ for any braid $\beta$ such that $K=\widehat{\beta}$ defines a knot invariant. Furthermore,~$h(K)$ is equal to half the Murasugi signature of $K$:
$$h(K)=\frac{1}{2}\sigma(K).$$
\end{theorem}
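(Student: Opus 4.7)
The plan is to prove the two assertions in sequence: first, that $h(\beta)$ is unchanged under the two Markov moves (so that setting $h(K) := h(\beta)$ is well defined), and then that $h(K) = \frac{1}{2}\sigma(K)$ by showing both sides satisfy the same crossing change recursion and agree on the unknot.

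For Markov invariance, two braids give rise to the same oriented link if and only if they are related by conjugation $\beta \mapsto \alpha\beta\alpha^{-1}$ within a fixed $B_n$ and stabilization $\beta \in B_n \mapsto \beta\sigma_n^{\pm 1} \in B_{n+1}$. Conjugation invariance is the easier case: the homeomorphism of $\SU(2)^n \times \SU(2)^n$ induced by $\alpha$ is orientation preserving, preserves the traceless locus, fixes $\Lambda_n$ pointwise (as $\Lambda_n$ is the diagonal in the traceless part), and carries $\Gamma_n(\beta)$ onto $\Gamma_n(\alpha\beta\alpha^{-1})$, so it descends to an orientation-preserving diffeomorphism of $\widehat{H}_n$ preserving the algebraic intersection. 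Stabilization is more subtle: the crux is to identify a neighborhood inside $\widehat{H}_{n+1}$ in which $\widehat\Lambda_{n+1} \cap \widehat\Gamma_{n+1}$ reduces, as an oriented intersection, to $\widehat\Lambda_n \cap \widehat\Gamma_n$, and to check that no spurious intersection points are created by the extra strand. This demands a local model near the stabilizing generator and a careful comparison of the orientations induced on the two pieces as the dimension jumps by $2$.

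For the identification $h(K) = \frac{1}{2}\sigma(K)$, I would first observe that the unknot is the closure of the trivial braid in $B_1$, where $\Lambda_1 = \Gamma_1$ and all traceless $\SU(2)$-representations of the free group on one generator are reducible, so $\widehat{R}(X_{\mathrm{unknot}}) = \varnothing$ and $h(\mathrm{unknot}) = 0 = \frac{1}{2}\sigma(\mathrm{unknot})$. The main step is a crossing change formula: if $K_+$ and $K_-$ differ at a single crossing and neither $\Delta_{K_+}(-1)$ nor $\Delta_{K_-}(-1)$ vanishes, then
\[
h(K_+) - h(K_-) = \begin{cases} 0 & \text{if } \Delta_{K_+}(-1)\Delta_{K_-}(-1) > 0, \\ -1 & \text{if } \Delta_{K_+}(-1)\Delta_{K_-}(-1) < 0. \end{cases}
\]
The proof of this formula is local: represent $K_\pm$ as closures of braids differing by one letter $\sigma_i^{\pm 2}$, and interpolate between the two intersection varieties $\widehat\Gamma_n(\beta\sigma_i^2)$ and $\widehat\Gamma_n(\beta)$ by a one-parameter family; each transverse crossing with $\widehat\Lambda_n$ contributes $\pm 1$, and a sign calculation identifies the total contribution with the sign of $\Delta_{K_+}(-1)\Delta_{K_-}(-1)$, exploiting the fact that $\Delta_K(-1)$ can be computed as a determinant of the same matrix that governs the local linear model. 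Combined with Proposition~\ref{prop:CrossingChange}(2) at $\omega = -1$ (which, rewritten in terms of $D_K(i)$ and hence $\Delta_K(-1)$, gives the analogous formula for $\frac{1}{2}\sigma(K_+) - \frac{1}{2}\sigma(K_-)$), and with the fact that any knot is reducible to the unknot by a finite sequence of crossing changes (which can be chosen to avoid the vanishing locus of $\Delta_K(-1)$), the equality $h(K) = \frac{1}{2}\sigma(K)$ follows by induction.

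The main obstacles are the sign analysis in the crossing change step and the stabilization invariance. For the crossing change, one must fix coherent orientations on $\widehat\Lambda_n$, $\widehat\Gamma_n$, and $\widehat{H}_n$, then track through the local model how the sign of each transverse intersection is read off from the local linearization, and finally reconcile this with the sign of $\Delta_K(-1)$ appearing in Proposition~\ref{prop:CrossingChange}. Stabilization, while essentially a dimension-counting argument, similarly requires verifying that orientations are compatible across the dimension jump and that the free $\operatorname{SO}(3)$-action survives the passage from $n$ to $n+1$ so that the quotient intersections match up. Both steps are where Lin's original paper does its real work.
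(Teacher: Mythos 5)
Your outline follows essentially the same route as the proof the paper attributes to Lin (and only cites): Markov invariance of $h(\beta)$, then a crossing-change recursion for $h$ matched against the signature recursion of Remark~\ref{rem:KnotsUnknotting} at $\omega=-1$, anchored at the unknot where $\widehat{R}(X_K)=\varnothing$. The hard points you flag (orientations in the local model, stabilization, and compactness of $\widehat{\Lambda}_n\cap\widehat{\Gamma}_n$ away from reducibles) are exactly where Lin's argument does its work, so the plan is sound and consistent with the survey's account.
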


Lin's work was later generalized by Herald~\cite{Herald} and Heusener-Kroll~\cite{HeusenerKroll} to show that the Levine-Tristram signature~ $\sigma_K(e^{2i\theta})$ can be obtained as a signed count of conjugacy classes of irreducible $\operatorname{SU}(2)$-representations with trace $2\operatorname{cos}(\theta)$. 
Herald obtained this result via a gauge theoretic interpretation of the Casson-Lin invariant (to do so, he used a $4$-dimensional interpretation of the signature), while Heusener-Kroll generalized Lin's original proof (which studies the behavior of~$h(K)$ under crossing changes and uses Remark~\ref{rem:KnotsUnknotting}). 

We also refer to~\cite{HeusenerOrientation} for an interpretation of Lin's construction using the plat closure of a braid (the result is closer to Casson's original construction in terms of Heegaard splittings~\cite{AkbulutMcCarthy}), and to~\cite{CollinSteer} for a construction of an instanton Floer homology theory whose Euler characteristic is the Levine-Tristram signature. 
Is there a formula for links? Can Theorem~\ref{thm:LinTheorem} be understood using the constructions of Section~\ref{sec:3DDefOther}?

\subsection{The Gambaudo-Ghys formula}
\label{sub:GG}

Since the Alexander polynomial can be expressed using the Burau representation of the braid group~\cite{Burau}, one might wonder whether a similar result holds for the Levine-Tristram signature. This subsection describes work of Gambaudo and Ghys~\cite{GambaudoGhys}, a consequence of which answers this question in the positive.
\medbreak
Let $B_n$ denote the $n$-stranded braid group. Given $\omega \in S^1$, Gambaudo and Ghys study the map $B_n \to \Z, \beta \mapsto \sigma_{\widehat{\beta}}(\omega)$ obtained by sending a braid to the Levine-Tristram signature of its closure. While this map is not a homomorphism, these authors express the homomorphism defect $\sigma_{\widehat{\alpha\beta}}(\omega)-\sigma_{\widehat{\alpha}}(\omega)-\sigma_{\widehat{\beta}}(\omega)$ in terms of the reduced Burau representation
$$\overline{ \mathcal{B}}_t \colon B_n\to\mathit{GL}_{n-1}(\Z[t^{\pm 1}])\,.$$
We briefly recall the definition of $\overline{ \mathcal{B}}_t$. Any braid $\beta \in B_n$ can be represented by (an isotopy class of) a homeomorphism $h_\beta \colon D_n \to D_n$ of the punctured disk $D_n$. This punctured disk has a canonical infinite cyclic cover $D_n^\infty$ (corresponding to the kernel of the map $\pi_1(D_n) \to \Z$ sending the obvious generators of $\pi_1(D_n)$ to $1$) and, after fixing basepoints, the homeomorphism~$h_\beta$ lifts to a homeomorphism $\widetilde{h}_\beta \colon D_n^\infty \to D_n^\infty$. It turns out that $H_1(D_n^\infty;\Z)$ is a free $\Z[t^{\pm 1}]$-module of rank $n-1$ and the \emph{reduced Burau representation} is the $\Z[t^{\pm 1}]$-linear automorphism of $H_1(D_n^\infty;\Z)$ induced by~$\widetilde{h}_\beta$. This representation is unitary with respect to the equivariant skew-Hermitian form on $H_1(D_n^\infty;\Z)$ which is defined by mapping $x,y \in H_1(D_n^\infty;\Z)$ to 
$$ \xi (x,y)=\sum_{n \in \Z} \langle x,t^n y \rangle t^{-n}. $$
In particular, evaluating any matrix for $\overline{ \mathcal{B}}_t(\beta)$ at $t=\omega$, the matrix $\overline{\mathcal{B}}_\omega(\alpha)$ preserves the skew-Hermitian form obtained by evaluating a matrix for $\xi$ at $t=\omega$. Therefore, given two braids~$\alpha,\beta\in~B_n$ and~$\omega \in S^1$, one can consider the Meyer cocycle of the two unitary matrices~$\overline{\mathcal{B}}_{\omega}(\alpha)$ and~$\overline{\mathcal{B}}_{\omega}(\beta)$. Here, given a skew-Hermitian form $\xi$ on a complex vector space $\C$ and two unitary automorphisms $\gamma_1,\gamma_2$ of $(V,\xi)$, the \emph{Meyer cocycle} $\operatorname{Meyer}(\gamma_1,\gamma_2)$ is computed by considering the space $ E_{\gamma_1,\gamma_2}=\operatorname{im}(\gamma_1^{-1}-\id)\cap \operatorname{im}(\id-\gamma_2)$ and taking the signature of the Hermitian form obtained by setting~$b(e,e')=\xi(x_1+x_2,e')$ for~$e=\gamma_1^{-1}(x_1)-x_1=x_2-\gamma_2(x_2)\in E_{\gamma_1,\gamma_2}$~\cite{Meyer, MeyerTwisted}.

The following result is due to Gambaudo and Ghys~\cite[Theorem A]{GambaudoGhys}.

\begin{theorem}
\label{thm:GG}
For all~$\alpha,\beta\in B_n$ and~$\omega\in S^1$ of order coprime to~$n$, the following equation holds:
\begin{equation}
\label{equ:GG}
\sigma_{\widehat{\alpha\beta}}(\omega)-\sigma_{\widehat{\alpha}}(\omega)-\sigma_{\widehat{\beta}}(\omega)=-\operatorname{Meyer}(\overline{\mathcal{B}}_\omega(\alpha),\overline{\mathcal{B}}_\omega(\beta)).
\end{equation}
\end{theorem}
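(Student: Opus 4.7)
The plan is to deduce the formula from the 4-dimensional definition of $\sigma_L(\omega)$ given by Theorem~\ref{thm:4DDefSurfaceTop}, combined with Wall's non-additivity theorem for signatures of 4-manifolds glued along a 3-manifold with boundary; the Meyer cocycle will appear precisely as the correction to Novikov additivity.

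For any braid $\gamma\in B_n$, I would use the canonical locally flat oriented surface $F_\gamma\subset D^4$ with boundary $\widehat\gamma$, constructed by stacking $n$ parallel disks and attaching half-twisted bands according to a braid word for $\gamma$. The key geometric observation is that $F_{\alpha\beta}$ is the vertical concatenation of $F_\alpha$ and $F_\beta$, so their exteriors decompose as $W_{F_{\alpha\beta}}=W_{F_\alpha}\cup_Y W_{F_\beta}$, where $Y$ is a thickened copy of the punctured disk $D_n$ sitting inside $D^4$. All gluings respect the abelianization sending each meridian of $F_\gamma$ to $\omega$, so Theorem~\ref{thm:4DDefSurfaceTop} gives
$$\sigma_{\widehat\gamma}(\omega)=\operatorname{sign}\bigl(\lambda_{W_{F_\gamma},\C_\omega}\bigr),\qquad \gamma\in\{\alpha,\beta,\alpha\beta\}.$$
Moreover, the twisted homology $H_1(Y;\C_\omega)$ of the gluing region is naturally identified with the reduced Burau module evaluated at $\omega$, and the intersection form it carries is precisely the skew-Hermitian form obtained by evaluating $\xi$ at $\omega$.

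Wall's non-additivity theorem applied to this decomposition then yields
$$\operatorname{sign}\bigl(\lambda_{W_{F_{\alpha\beta}},\C_\omega}\bigr)=\operatorname{sign}\bigl(\lambda_{W_{F_\alpha},\C_\omega}\bigr)+\operatorname{sign}\bigl(\lambda_{W_{F_\beta},\C_\omega}\bigr)+\mu(L_\alpha,L_0,L_\beta),$$
where $L_\alpha,L_0,L_\beta$ are three Lagrangian subspaces of the symplectic-type space $(H_1(Y;\C_\omega),\xi_\omega)$ arising as kernels of the inclusions of $Y$ into the three pieces. A direct identification shows that $L_0$ is the diagonal Lagrangian in $V\oplus V$ (where $V$ denotes the Burau space at $\omega$), while $L_\alpha$ and $L_\beta$ are the graphs of $\overline{\mathcal B}_\omega(\alpha)$ and $\overline{\mathcal B}_\omega(\beta)$. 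The standard identity relating the Wall triple index of two graph Lagrangians (with respect to the diagonal) to the Meyer cocycle of the corresponding $\xi_\omega$-unitary automorphisms then rewrites the correction as $-\operatorname{Meyer}(\overline{\mathcal B}_\omega(\alpha),\overline{\mathcal B}_\omega(\beta))$, proving the formula. The coprimality hypothesis enters at this stage, through the condition $\omega^n\ne 1$, which makes $\xi_\omega$ nondegenerate on the Burau space and kills the twisted homology of the torus boundary components of each $W_{F_\gamma}$, so that Wall's theorem applies in its cleanest form.

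The main obstacle is the sign and orientation bookkeeping required to identify Wall's triple Maslov index with exactly $-\operatorname{Meyer}(\overline{\mathcal B}_\omega(\alpha),\overline{\mathcal B}_\omega(\beta))$ rather than with its negative or with the cocycle of the inverses or adjoints: one must pin down orientations on the surfaces $F_\gamma$, on $Y$, and on the Lagrangians $L_\bullet$ so that Wall's and Meyer's sign conventions agree. A secondary but genuine technical point is arranging the braided surfaces $F_\gamma$ to glue smoothly (after rounding corners) while keeping their complements compatible with the chosen coefficient systems; this is routine but has to be carried out carefully.
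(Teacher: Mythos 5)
The survey contains no proof of Theorem~\ref{thm:GG}: it attributes the result to~\cite{GambaudoGhys} and only records that the argument is $4$-dimensional, so there is no in-paper argument to compare against. Your sketch is a faithful reconstruction of that $4$-dimensional proof in its Novikov--Wall packaging (Gambaudo and Ghys themselves phrase it via cyclic branched covers and a fibration over a pair of pants, but the two formulations are essentially equivalent). The key geometric claims are sound: writing $D^4=B_1\cup_{D^3}B_2$ with the branch/band data of $\alpha$ over one half-disk of the base and that of $\beta$ over the other, the braided surface of $\alpha\beta$ meets the separating $D^3$ in $n$ trivial arcs, the gluing region of the exteriors is $D_n\times I$, the separating surface has $H_1(\,\cdot\,;\C_\omega)\cong V\oplus V$ precisely because $\omega\neq 1$ kills the meridional boundary circles and $\omega^n\neq 1$ kills the outer one (this is indeed where coprimality enters), and the three Wall Lagrangians are the diagonal and the two Burau graphs, whose triple index is $\pm\operatorname{Meyer}$.

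Two points deserve more than the ``routine bookkeeping'' label you give them. First, Wall's non-additivity theorem is classically stated for ordinary intersection forms; what you need is its version for intersection forms twisted by the unitary coefficient system $\C_\omega$, with the Maslov correction computed relative to the twisted intersection form on the separating surface. This is true and available, but it is a genuine input to be proved or cited, not a corner-rounding issue. Second, you must verify that the twisted intersection form on $H_1(D_n;\C_\omega)$ coincides (up to a positive scalar) with the form obtained by evaluating $\xi$ at $\omega$, since the algebraic identity converting the Wall triple index of two graph Lagrangians relative to the diagonal into the Meyer cocycle is sensitive to which skew-Hermitian form is used. Neither point undermines the strategy, but together with the sign conventions you already flag, they are where the real content of the proof lives.
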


In fact, since both sides of~\eqref{equ:GG} define locally constant functions on $S^1$, Theorem~\ref{thm:GG} holds on a dense subset of~$S^1$. 
The proof of Theorem~\ref{thm:GG} is 4-dimensional; can it also be understood using the constructions of Section~\ref{sec:3DDefOther}? The answer ought to follow from~\cite{Bourrigan}, where a result analogous to Theorem~\ref{thm:GG} is established for Blanchfield pairings; see also~\cite{GhysRanickiEnsaios}.

We conclude  this survey by applying Theorem~\ref{thm:GG} recursively in order to provide a formula for the Levine-Tristram signature purely in terms of braids. 
Indeed, using $\sigma_1,\ldots,\sigma_{n-1}$ to denote the generators of the braid group~$B_n$ (and recalling that the signature vanishes on trivial links), the next result follows from Theorem~\ref{thm:GG}:

\begin{corollary}
\label{cor:GG}
If an oriented link $L$ is the closure of a braid $\sigma_{i_1}\cdots \sigma_{i_l}$, then the following equality holds on a dense subset of $S^1$:
$$ \sigma_L(\omega)=-\sum_{j=1}^{l-1} \operatorname{Meyer}(\overline{\mathcal{B}}_\omega(\sigma_{i_1}\cdots \sigma_{i_j}),\overline{\mathcal{B}}_\omega(\sigma_{i_{j+1} })). $$
\end{corollary}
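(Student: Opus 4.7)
The plan is to derive the formula by applying Theorem~\ref{thm:GG} iteratively in a telescoping fashion. I set $\alpha_j := \sigma_{i_1}\sigma_{i_2}\cdots\sigma_{i_j}$ for $j = 1, \ldots, l$, so that $\alpha_1 = \sigma_{i_1}$, the closure $\widehat{\alpha_l}$ equals $L$, and $\alpha_{j+1} = \alpha_j \cdot \sigma_{i_{j+1}}$. Applying Theorem~\ref{thm:GG} to this factorization yields, for each $j = 1, \ldots, l-1$,
$$\sigma_{\widehat{\alpha_{j+1}}}(\omega) - \sigma_{\widehat{\alpha_j}}(\omega) - \sigma_{\widehat{\sigma_{i_{j+1}}}}(\omega) = -\operatorname{Meyer}(\overline{\mathcal{B}}_\omega(\alpha_j), \overline{\mathcal{B}}_\omega(\sigma_{i_{j+1}})).$$

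The crux of the argument is to observe that for any single Artin generator $\sigma_k \in B_n$, the closure $\widehat{\sigma_k}$ is an $(n-1)$-component unlink: strands $k$ and $k+1$ merge into a single unknotted component, while the remaining $n-2$ strands close off as trivial circles. The Levine-Tristram signature of any unlink vanishes identically (take a Seifert surface consisting of disjoint disks, whose Seifert pairing is trivial), so $\sigma_{\widehat{\sigma_{i_j}}}(\omega) = 0$ for every $j$. Summing the displayed identity from $j = 1$ to $l-1$ therefore collapses the left-hand side to $\sigma_{\widehat{\alpha_l}}(\omega) - \sigma_{\widehat{\alpha_1}}(\omega) = \sigma_L(\omega)$, giving precisely the claimed formula.

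Regarding the qualification \emph{on a dense subset of $S^1$}: Theorem~\ref{thm:GG} is stated for $\omega$ of finite order coprime to $n$, a set already dense in $S^1$; since the telescoping applies Theorem~\ref{thm:GG} only within the single braid group $B_n$, the hypothesis on $\omega$ is preserved at every step, so the derived identity holds on the same dense set (and then extends to all but finitely many points of $S^1$ by local constancy). I do not anticipate any genuine obstacle: the entire corollary is a clean iteration of Theorem~\ref{thm:GG}, and the only auxiliary input is the elementary vanishing of the signature of an unlink.
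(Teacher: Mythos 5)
Your proposal is correct and follows exactly the route the paper indicates: a telescoping application of Theorem~\ref{thm:GG} to the partial products $\sigma_{i_1}\cdots\sigma_{i_j}$, combined with the vanishing of the signature on the trivial links arising as closures of single generators. The paper gives no further detail beyond this, so your argument supplies precisely the intended proof.
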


\subsection*{Acknowledgments}
I thank Durham University for its hospitality and was supported by an early Postdoc.Mobility fellowship funded by the Swiss National Science Foundation.

\bibliography{bibliothesis}
\bibliographystyle{plain}

\end{document}